	\definecolor{green}{rgb}{0.0,0.5,0.0}
   \def\MR#1{}
\newcommand{\RR}{\mathbb{R}}
\renewcommand{\SS}{\mathbb{S}}
\newcommand{\TT}{\mathbb{T}}
\newcommand{\ZZ}{\mathbb{Z}}
\newcommand{\calL}{\mathcal{L}}
\newcommand{\calP}{\mathcal{P}}
\newcommand{\vol}{\mathrm{Vol}}
\newcommand{\voct}{v_{\textup{oct}}}
\newcommand{\vtet}{v_{\textup{tet}}}
\newcommand{\svv}[1]{\mathbf{\widetilde{#1}}}
\newcommand{\vv}[1]{\mathbf{#1}}
\newcommand{\bdy}{\partial} 
\newcommand{\from}{\colon\thinspace}
\newcommand{\GL}{\mathrm{GL}}
\newtheorem{theorem}{Theorem}[section]
\newtheorem{proposition}[theorem]{Proposition}
\newtheorem{lemma}[theorem]{Lemma}
\newtheorem{corollary}[theorem]{Corollary}
\theoremstyle{definition}
\newtheorem{definition}[theorem]{Definition}
\newtheorem{remark}[theorem]{Remark}
\newtheorem{question}[theorem]{Question}
\newtheorem*{namedtheorem}{\theoremname}
\newcommand{\theoremname}{testing}
\newenvironment{named}[1]{\renewcommand{\theoremname}{#1}\begin{namedtheorem}}{\end{namedtheorem}}
\title[Volume bounds for hyperbolic rod complements in the $3$-torus]{Volume bounds for hyperbolic rod complements \\ in the $3$-torus}
\author{Norman Do}
\address{School of Mathematics, Monash University, VIC 3800, Australia}
\email{norm.do@monash.edu} 
\author{Connie On Yu Hui} 
\address{School of Mathematics, Monash University, VIC 3800, Australia} 
\email{onyu.hui@monash.edu | connieonyuhui@gmail.com} 
\author{Jessica S. Purcell}
\address{School of Mathematics, Monash University, VIC 3800, Australia}
\email{jessica.purcell@monash.edu}
\begin{document}

\begin{abstract} 
The study of rod complements is motivated by rod packing structures in crystallography. We view them as complements of links comprised of Euclidean geodesics in the $3$-torus. Recent work of the second author classifies when such rod complements admit hyperbolic structures, but their geometric properties are yet to be well understood. In this paper, we provide upper and lower bounds for the volumes of all hyperbolic rod complements in terms of rod parameters,  and show that these bounds may be loose in general. We introduce better and asymptotically sharp volume bounds for a family of rod complements. The bounds depend only on the lengths of the continued fractions formed from the rod parameters. 
\end{abstract}

\maketitle

\section{Introduction}

The present work is motivated by the notion of rod packing structures in crystallography. In 1977, O'Keeffe and Andersson observed that many crystal structures can be described as a packing of uniform cylinders, representing linear or zigzag chains of atoms or connected polyhedra~\cite{OKeeffe-Andersson:RodPCrystalChem}. In 2001, O'Keeffe et~al. classified some of the simplest so-called rod packings in terms of arrangements in Euclidean space~\cite{OKeeffeEtAl:CubicRodPackings}. Rod packings have also appeared in the biological science and materials science literature~\cite{RosiEtAl:RodPackingsMOF, ERH:PeriodicEntanglementII, Norlen-AA:CubicRodAndSkin, EH:SwollenCorneocytes}.

A rod packing structure exhibits translational symmetry along each dimension in a three-dimensional Euclidean space, it is thus natural to view a rod packing structure as a geodesic link in the $3$-torus, whose covering space is the three-dimensional Euclidean space. In this paper, we use tools from $3$-manifold geometry and topology to study the complements of these geodesic links, called \emph{rod complements}. In particular, Thurston's geometrisation theorem implies that each rod complement can be decomposed into geometric pieces. Indeed, each rod complement with three or more linearly independent rods is hyperbolic or has a unique hyperbolic rod complement component in its JSJ decomposition \cite{Hui-Purcell:RodPackings, Hui:ClassifyT3Rods, Burton-dePaiva-He-Hui:Crushing}. This means that geometric invariants such as the hyperbolic volume could be used to classify and distinguish rod complements. However, we need to know how geometry relates to the vector descriptions of rods in the $3$-torus, which aligns with the descriptions of rods in crystallography. This is still unknown in general.

In previous work, the second and third authors used vector description and the theory of links in the $3$-sphere to identify an infinite family of rod complements that admit complete hyperbolic structures \cite{Hui-Purcell:RodPackings}. Following this, the second author provided a complete classification of the geometric structures on rod complements in the $3$-torus~\cite{Hui:ClassifyT3Rods}. As a consequence, checking the hyperbolicity of a rod complement reduces to a linear algebra problem. 
While \cite{Hui-Purcell:RodPackings, Hui:ClassifyT3Rods} provide a convenient characterisation of when a rod complement is hyperbolic, they do not give further information on the metric. In this paper, we provide more information on the hyperbolic structures of rod complements via the study of their volumes.

The Mostow--Prasad rigidity theorem states that a complete hyperbolic metric on a finite-volume hyperbolic $3$-manifold is unique, so hyperbolic volume is a topological invariant. In the crystallographic setting, the uniqueness of hyperbolic structures allows us to associate each rod packing structure with a real number, namely the volume. When volumes are distinct, this provides a simple way to distinguish rod packing structures and avoid the more complicated symmetry descriptions that are often used in chemistry, see \cite{OKeeffeEtAl:CubicRodPackings} for examples.

For a rod complement in the $3$-torus, each rod has an associated direction in the unit cube fundamental region of the $3$-torus. We encode the direction of each rod by integer vector coordinates, which we call \emph{rod parameters}. Our most general result provides upper and lower volume bounds in terms of the number of rods and their rod parameters.

\begin{named}{\cref{Thm:VolBoundsDet}}
\Copy{VolBoundsDet}{Let $R_1, R_2, \ldots, R_n$ be disjoint rods in the 3-torus whose complement is a hyperbolic $3$-manifold $M$. After applying a linear homeomorphism and renumbering, if necessary, we may assume that there is a positive integer $k < n$ such that $R_{k+1}, R_{k+2}, \ldots, R_n$ are exactly the $(0,0,1)$-rods. Suppose that $R_i$ has direction vector $(p_i, q_i, z_i)$, for $i = 1, 2, \ldots, n$. Then we have the inequalities
\[
n \, \vtet < \vol(M) \leq 8 \, \vtet \left( \sum_{1 \leq i < j \leq k} \abs{p_i q_j - p_j q_i} + \sum_{1 \leq i \leq k} \left( \gcd(p_i,q_i) - 1 \right) \right),
\] 
where $\vtet \approx 1.01494$ is the volume of the regular ideal tetrahedron.}
\end{named}

The lower bound is due to a result proved by Adams, which applies to any cusped hyperbolic $3$-manifold~\cite{Adams:VolNCuspedMfds}. Such a bound can be loose in general; indeed, we find families of rod complements for which the number of rods is fixed at $n = 3$, but for which the volumes approach infinity.

The upper bound uses more recent results of Cremaschi and Rodr\'{\i}guez Migueles~\cite[Theorem~1.5]{Cremaschi-RodriguezMigueles:HypOfLinkCpmInSFSpaces}, which can be applied to many complements of geodesic links in Seifert fibred spaces, see also \cite{CRY:VolAndFillingCollectionsOfMulticurves}. Again, such a bound can be loose, even when restricted to rod complements; there are families of rod complements for which the volumes are bounded but for which the right side of the inequality above grows to infinity.

Thus, while \cref{Thm:VolBoundsDet} provides reasonable initial bounds that may be strong in certain cases, they are somewhat unsatisfying in general. It would be desirable to have upper and lower volume bounds that depend linearly on the same quantity. For example, hyperbolic volumes of 2-bridge knots~\cite{Gueritaud-AppByFuter:TwoBridgeLink}, alternating knots~\cite{Lackenby:AltVol}, and highly twisted knots~\cite{Futer-Kalfagianni-Purcell:DehnFillingVolumeJonesPolynomial} are known to be bounded above and below by linear functions of the number of twist regions. For all of these knot complements, the upper bound is asymptotically sharp. The lower bound is asymptotically sharp in the 2-bridge case~\cite{Gueritaud-AppByFuter:TwoBridgeLink}, and sharp, realised by the Borromean rings, in the alternating case~\cite[Theorem~2.2]{Agol-Storm-Thurston:LowerBounds}. Similarly, there are upper and lower volume bounds for adequate knots in terms of coefficients of coloured Jones polynomials~\cite{Dasbach-Lin:VolumishThmForJonesPolyOfAltKnots, Futer-Kalfagianni-Purcell:DehnFillingVolumeJonesPolynomial, FKP:Guts}. There are also upper and lower volume bounds for fibred 3-manifolds in terms of a quantity related to the action of the monodromy map~\cite{Brock:VolMappingTori}, with analogous results for cusp volumes~\cite{FuterSchleimer}. One would like to obtain such results for rod complements.

While we have not obtained coarse volume bounds of this form in general, we do find improved, asymptotically sharp volume bounds for infinite families of rod complements in terms of the lengths of the continued fractions formed from their rod parameters.  These lengths of continued fractions can remain the same when rod parameters increase significantly.

\begin{named}{\cref{Thm:MainOrthogonalRods}}
\Copy{MainOrthogonalRods}{Let $R_1, R_2, \ldots, R_n$ be disjoint rods in the $3$-torus whose complement is $M$, where $n \geq 3$. Suppose that $R_n$ has direction vector $(0,0,1)$ and for $i < n$, $R_i$ has direction vector $(p_i, q_i, 0)$, with $(p_i, q_i) \neq (p_{i+1}, q_{i+1})$ for $i = 1, 2, \ldots, n-2$ and $(p_{n-1}, q_{n-1}) \neq (p_1,q_1)$. Suppose that $R_1, R_2, \ldots, R_{n-1}$ are positioned from top to bottom in the unit cube representation of the $3$-torus. Let $[c_{i1}; c_{i2}, \ldots, c_{im_i}]$ be a continued fraction expansion for $p_i/q_i$. Then $M$ is hyperbolic and its volume satisfies the asymptotically sharp upper bound
\[
\vol(M) \leq 2 \, \voct \sum_{i=1}^{n-1} m_i.
\]
Suppose in addition that 
\[
C \coloneq \min_{\substack{1 \leq i \leq n-1 \\ j \geq 2}} \{|c_{ij}|, |c_{i1}-c_{(i-1) 1}|\} \geq 6,
\]
where $c_{01}$ is interpreted as $c_{(n-1)1}$. Then the volume satisfies the lower bound
\[
\vol(M) \geq \left( 1 - \frac{4\pi^2}{C^2+4} \right)^{3/2} 2 \, \voct \sum_{i=1}^{n-1} m_i.
\]}
\end{named}

\cref{Thm:MainOrthogonalRods} leads to the following consequences.

\begin{named}{\cref{Cor:BadUpperBound}}
\Copy{BadUpperBound}{There exists a sequence of hyperbolic rod complements with bounded volume, but for which the upper bound of \cref{Thm:VolBoundsDet} grows to infinity.}
\end{named}

\begin{named}{\cref{Cor:3CuspedInfVol}} 
\Copy{3CuspedInfVol}{There exists a sequence of hyperbolic rod complements, each with three rods, whose volumes grow to infinity.}
\end{named}

Other works related to geometry and periodic links include \cite{ERH:IdealGeometry}, in which Evans, Robins, and Hyde studied 3-periodic links using energy functions. In~\cite{ES:HyperbolicBiological}, Evans and Schr\"oder-Turk use 2-dimensional hyperbolic geometry to study triply periodic links embedded in the $3$-dimensional Euclidean space.

The structure of the paper is as follows.
\begin{itemize}
\item In \cref{Sec:Prelim}, we introduce some terminology, notation and foundational results that are used throughout the paper. These pertain to rod complements, continued fractions and homeomorphisms from the $n$-dimensional torus to itself. 
\item In \cref{Sec:VolBounds}, we provide general volume bounds for all hyperbolic rod complements in the 3-torus (\cref{Thm:VolBoundsDet}). The upper bound is in terms of the rod parameters, while the lower bound is only in terms of the number of rods.
\item In \cref{Sec:Nested}, we introduce the notion of nested annular Dehn filling in the 3-torus.
\item In \cref{Sec:3CuspedInfVol}, we use the notion of nested annular Dehn filling to provide more refined volume bounds for a particular class of rod complements (\cref{Thm:MainOrthogonalRods}). This is sufficient to exhibit a family of rod complements with bounded volumes for which the upper bound of \cref{Thm:VolBoundsDet} grows to infinity (\cref{Cor:BadUpperBound}) and another family with bounded number of rods whose volumes grow to infinity (\cref{Cor:3CuspedInfVol}).
\item In \cref{Sec:Discussion}, we conclude with brief discussion of open questions that are motivated by the present work.
\end{itemize}

\subsection*{Acknowledgements}

We thank Jos\'{e} Andr\'{e}s Rodr\'{\i}guez-Migueles for helpful conversations. This work was partially supported by the Australian Research Council grant DP240102350. The first author was supported by the Australian Research Council grant FT240100795. The second author was supported by Monash International Tuition Scholarships and Monash Graduate Scholarship. We thank the anonymous reviewer for the helpful suggestions and comments.

\section{Preliminaries} \label{Sec:Prelim} 

\subsection{Rod complements}

We consider the $3$-torus $\TT^3$ as the unit cube $[0,1] \times [0,1] \times [0,1]$ in $3$-dimensional Euclidean space, with opposite faces glued identically, as in \cite{Hui-Purcell:RodPackings, Hui:ClassifyT3Rods}. Its universal cover is $\RR^3$ and it inherits the Euclidean metric from $\RR^3$.

A \emph{rod} is the projection of a Euclidean straight line with rational slope in $\RR^3$ to $\TT^3$ under the covering map.

For $n$ a positive integer, an \emph{$n$-rod complement} is the complement of $n$ disjoint rods in the $3$-torus. When $n$ is unspecified, we refer to such a manifold simply as a \emph{rod complement}.

Let $p, q, z$ be integers, not all zero, with $\gcd(p,q,z) = 1$. A $(p,q,z)$-rod is a geodesic in $\TT^3$ that has $(p,q,z)$ as a tangent vector. We call $(p,q,z)$ a \emph{direction vector} of the rod, where we consider $(p,q,z)$ only up to a change of sign. We call the integers $p, q, z$ the \emph{rod parameters} of the rod. A \emph{standard rod} is a $(1,0,0)$-rod, a $(0,1,0)$-rod, or a $(0,0,1)$-rod.

A rod complement is said to be \emph{hyperbolic} if it admits a complete hyperbolic structure; for further details on hyperbolic geometry, see for example~\cite{Purcell:HyperbolicKnotTheory}. In previous work, the second author classified exactly when rod complements are hyperbolic, Seifert fibred or toroidal.

\begin{theorem}[Hui,~\cite{Hui:ClassifyT3Rods}] \label{Thm:ClassifyingT3Rods}
Let $R_1, R_2, \ldots, R_n$ be disjoint rods in $\TT^3$. The rod complement $\TT^3 \setminus (R_1 \cup R_2 \cup \cdots \cup R_n)$ is:
\begin{enumerate}
\item \label{condition:hypIff} hyperbolic if and only if $\{R_1, R_2, \ldots, R_n\}$ contains three linearly independent rods and each pair of disjoint parallel rods are not linearly isotopic in the complement of the other rods;
\item \label{condition:SFIff} Seifert fibred if and only if all rods have the same direction vector; and 
\item \label{condition:ToroidalIf} toroidal if 
\begin{enumerate}
\item the direction vectors of the rods all lie in the same plane; or 
\item there exist two distinct rods that are linearly isotopic in the complement of the other rods.
\end{enumerate} 
\end{enumerate} 
\end{theorem}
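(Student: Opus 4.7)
The plan is to establish parts (2) and (3) by direct topological construction and then deduce (1) from them via Thurston's geometrisation theorem. Throughout, I work with the compact manifold $M$ obtained by removing open regular neighbourhoods of the rods from $\TT^3$; this is irreducible (as $\TT^3$ is irreducible and the rods form a non-split collection of simple closed geodesics) and has boundary a disjoint union of tori, one per rod.

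For (2), the ``if'' direction: after a $\GL(3,\ZZ)$ change of coordinates sending the common direction to $(0,0,1)$, the complement becomes $(\TT^2 \setminus \{n \text{ points}\}) \times S^1$, which is visibly Seifert fibered. For the converse, a Seifert fibration of $M$ determines a regular fibre class whose image in $H_1(\TT^3) \cong \ZZ^3$ must be a common scalar multiple of each primitive rod direction vector (since on each boundary torus the fibre is an $a_i \mu_i + b_i \lambda_i$-slope, which maps to $b_i(p_i,q_i,z_i)$), forcing all rods to share a single direction. For (3)(a), a coordinate change sending the spanning plane of direction vectors to $\{z = 0\}$ makes every rod horizontal, so a torus $\TT^2 \times \{z_0\}$ at a generic height is disjoint from the rods and remains incompressible and non-boundary-parallel in $M$. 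For (3)(b), a linear isotopy between disjoint parallel rods $R_i$ and $R_j$ through the complement of the others sweeps out a properly embedded annulus $A \subset M$; taking the frontier of a regular neighbourhood of $N(R_i) \cup A \cup N(R_j)$ in $\TT^3$ yields a torus in $M$ whose enclosed solid torus has meridian disk piercing both $R_i$ and $R_j$, hence no compressing disk on that side, and standard arguments give essentialness on the other side as well.

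For part (1), the forward direction is immediate from the contrapositives of (2) and (3): absence of three linearly independent rods forces all direction vectors coplanar, giving toroidal by (3)(a); the presence of a parallel-isotopic disjoint pair gives toroidal by (3)(b). For the backward direction, since $M$ is irreducible with toroidal boundary and, by (2), not Seifert fibered, geometrisation reduces the claim to proving $M$ is atoroidal. I would assume an essential torus $T \subset M$ and analyse its class $[T] \in H_2(\TT^3; \ZZ) \cong \ZZ^3$. If $[T] = 0$, then $T$ is separating; lifting to $\RR^3$ shows $T$ bounds a compact region, and the presence of three linearly independent rods — each producing infinitely many disjoint translates in $\RR^3$ that must pierce any compact region — forces either a compressing disk or a boundary-parallelism. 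If $[T] \neq 0$, then $[T]$ is a multiple of a primitive class corresponding to a rational $2$-plane, and $T$ can be isotoped to a linear torus in $\TT^3$; the rods either pierce $T$ transversely or lie inside one complementary component, and cataloguing this configuration yields either a coplanar rod collection (contradicting linear independence) or an explicit parallel-isotopy between two disjoint rods (contradicting the second hypothesis).

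The main obstacle will be the atoroidality step, and specifically the delicate intermediate case in which $T$ compresses in $\TT^3$ but is essential in $M$: here the compression occurs through a disk meeting some rods, so the correct approach is a Haken-style hierarchy argument, cutting $T$ along compressing disks in $\TT^3$ and tracking how the resulting spheres must bound regions containing rods in a configuration ultimately forbidden by the hypotheses. A cleaner alternative is to pass to the universal cover $\RR^3$, where any lift of $T$ is a properly embedded plane by incompressibility, and then use the linear structure of rod lifts to constrain the plane's direction and position.
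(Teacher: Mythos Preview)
This theorem is not proved in the present paper: it is quoted verbatim from Hui's earlier work~\cite{Hui:ClassifyT3Rods} and used as a black box throughout. There is therefore no ``paper's own proof'' to compare your proposal against.

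That said, a few remarks on your sketch. Parts~(2) and~(3) are essentially fine as outlines, though your ``only if'' for~(2) needs one more sentence: you must rule out the possibility that the regular fibre is null-homologous in $\TT^3$ (equivalently, that every $b_i = 0$), for instance by observing that meridional fibres on all boundary components would let the fibration extend over the filled rods to a Seifert fibration of $\TT^3$ with null-homologous fibre, which does not exist.

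The real content, as you correctly identify, is atoroidality under the hypotheses of~(1), and here your sketch has genuine gaps beyond what you flag. In the $[T] \neq 0$ case you write that $T$ ``can be isotoped to a linear torus in $\TT^3$'' and then analyse how rods sit relative to it; but that isotopy lives in $\TT^3$, not in $M$, and will in general sweep across rods, so the conclusion about rod configurations does not follow. Similarly, in the $[T]=0$ case, ``lifting to $\RR^3$ shows $T$ bounds a compact region'' conflates the universal cover of $\TT^3$ with that of $M$; lifts of $T$ to $\widetilde{M}$ are planes in $\RR^3$ minus a lattice of lines, and extracting a contradiction from their geometry is exactly the work that Hui's paper carries out. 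Your proposed strategy is the right shape, but the passage from ``essential torus with prescribed homology class'' to ``forbidden rod configuration'' is where all the difficulty lies, and it is not something that a Haken-style compression argument alone will resolve without substantial further input.
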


In case (3)(b), suppose without loss of generality that $R_{n-1}$ and $R_n$ are linearly isotopic in the complement of the other rods. Then an essential torus encircling the linearly isotopic rods will cut the rod complement into a solid torus containing $R_{n-1}$ and $R_n$, and a new rod complement with rods $R_1, R_2, \ldots, R_{n-1}$. So if there were three linearly independent rods to begin with, there would be a unique hyperbolic rod complement appearing as a component of the JSJ decomposition; see~\cite[Theorem~19]{Burton-dePaiva-He-Hui:Crushing}. The upshot of this discussion is that rod complements are very commonly hyperbolic, in a certain sense.

Observe that in a hyperbolic rod complement, there may be several rods with the same direction vector, provided that for any two such rods, at least one other rod intersects the linear annuli bound by them. Two or more rods with the same direction vector are said to be \emph{parallel}.

\subsection{Continued fractions} \label{subsec:continued}

Let $p$, $q$ be nonzero relatively prime integers. Without loss of generality, we may assume $q > 0$ unless otherwise specified. The rational number $p/q$ can be expressed as a finite continued fraction
\[
\frac{p}{q} = [c_1;c_2,\ldots,c_m] \coloneq c_1+\cfrac{1}{c_2+\cfrac{1}{c_3 + \cfrac{1}{\ddots + \cfrac{1}{c_m}}}} \, ,
\]
where $c_1$ is an integer and $c_2, \ldots, c_m$ are non-zero integers. The integers $c_1, c_2, \ldots, c_m$ are called \emph{coefficients} or \emph{terms} of the continued fraction and the number $m$ is called the \emph{length} of the continued fraction.

Observe that a continued fraction expansion for a given rational number is not unique. For example, the rational number $\frac{7}{4}$ can be expressed in several ways, including $[1; 1, 3]$, $[1; 1, 2, 1]$ and $[2; -4]$. The upper bound of \cref{Thm:MainOrthogonalRods} is strengthened by using continued fraction expansions that have minimal length. In particular, if $m \geq 2$, we do not allow $c_m = 1$ in the continued fraction expansion above.

Note that the length of the continued fraction $[0] = \frac{0}{1}$ is one. For convenience, we define the ``empty'' continued fraction $[~] = \frac{1}{0}$ and consider its length to be zero.

The rational numbers whose continued fraction expansions we consider arise as slopes on the 2-torus. We consider the $2$-torus $\TT^2$ as the unit square $[0,1] \times [0,1]$ in $2$-dimensional Euclidean space, with opposite faces glued identically. Its universal cover is $\RR^2$ and it inherits the Euclidean metric from $\RR^2$.

Let $p$ and $q$ be integers, not both zero, with $\gcd(p, q) = 1$. A simple closed geodesic on $\TT^2$ is said to have \emph{slope $p/q$} or to be a \emph{$(p,q)$-curve} if it is isotopic to the projection of a line in $\RR^2$ with slope $\frac{q}{p}$. Observe that our definition of slope on the torus is the reciprocal of the corresponding slope on the plane. We defined slope of simple closed geodesics in this way because of our choices of notations in Section~\ref{Sec:Nested}.

\subsection{Homeomorphisms of the \texorpdfstring{$n$}{n}-torus}

The following are useful results concerning homeomorphisms of the $n$-dimensional torus $\TT^n$. The statements are well-known, but short proofs have been provided for completeness.

\begin{lemma} \label{Lem:GLnZ}
For $n\geq 2$, an element $A \in \GL(n, \ZZ)$ induces a homeomorphism from $\TT^n$ to itself.
\end{lemma}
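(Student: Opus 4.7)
The plan is to realize $\mathbb{T}^n$ as the quotient $\mathbb{R}^n/\mathbb{Z}^n$ and check that $A$, viewed as a linear self-map of $\mathbb{R}^n$, descends to a homeomorphism of this quotient. Since $A$ has integer entries, it sends $\mathbb{Z}^n$ into $\mathbb{Z}^n$, so for every $\vv{x} \in \mathbb{R}^n$ and $\vv{m} \in \mathbb{Z}^n$ we have $A(\vv{x} + \vv{m}) - A\vv{x} = A\vv{m} \in \mathbb{Z}^n$. This is precisely the condition needed for $A$ to induce a well-defined map $\widetilde{A} \from \mathbb{T}^n \to \mathbb{T}^n$ making the square
\[
\begin{array}{ccc} \mathbb{R}^n & \xrightarrow{A} & \mathbb{R}^n \\ \downarrow & & \downarrow \\ \mathbb{T}^n & \xrightarrow{\widetilde{A}} & \mathbb{T}^n \end{array}
\]
commute, where the vertical arrows are the quotient maps $\pi$.

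Next I would show that $\widetilde{A}$ is continuous. This follows from the universal property of the quotient: the composition $\pi \circ A \from \mathbb{R}^n \to \mathbb{T}^n$ is continuous, and since it is constant on the fibres of $\pi$, it factors as $\widetilde{A} \circ \pi$ with $\widetilde{A}$ continuous. (Equivalently, $\mathbb{T}^n$ has the quotient topology, so $\widetilde{A}$ is continuous iff $\widetilde{A} \circ \pi$ is.)

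To get a homeomorphism I would use that $A \in \GL(n,\mathbb{Z})$ means $A^{-1}$ exists in $\GL(n,\mathbb{Z})$, i.e.\ $A^{-1}$ also has integer entries. Applying the previous two steps to $A^{-1}$ yields a continuous induced map $\widetilde{A^{-1}} \from \mathbb{T}^n \to \mathbb{T}^n$. Commutativity of the diagrams for $A$ and $A^{-1}$ together with $A^{-1} A = AA^{-1} = I$ on $\mathbb{R}^n$ implies $\widetilde{A^{-1}} \circ \widetilde{A} = \widetilde{A} \circ \widetilde{A^{-1}} = \mathrm{id}_{\mathbb{T}^n}$, so $\widetilde{A}$ is a homeomorphism with inverse $\widetilde{A^{-1}}$.

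There is no real obstacle here; the entire content is the two integrality conditions (on $A$ and on $A^{-1}$) together with the universal property of the quotient topology, so the write-up should be a few lines at most.
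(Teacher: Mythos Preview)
Your proof is correct and takes essentially the same approach as the paper: both use that $A \in \GL(n,\ZZ)$ preserves the integer lattice $\ZZ^n$, so the linear map on $\RR^n$ descends to the quotient $\TT^n = \RR^n/\ZZ^n$. Your write-up is slightly more explicit about continuity via the quotient universal property and about obtaining the inverse from $A^{-1} \in \GL(n,\ZZ)$, whereas the paper phrases the same content in terms of fundamental domains; the underlying argument is the same.
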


\begin{proof}
The element $A \in \GL(n,\ZZ)$ gives rise to a homeomorphism from $\RR^n$ to itself that sends the integer lattice $\ZZ^n$ to itself. In particular, it takes the standard basis of $\RR^n$ to a basis formed by the columns of $A$, whose coordinates are integers. This produces a new fundamental domain for the torus. The induced homeomorphism simply maps the standard fundamental domain of the torus to this new fundamental domain via $A$. 
\end{proof}

In fact, it is known that when $n=2$ or $n=3$, $\GL(n, \ZZ)$ is the mapping class group of $\TT^n$. (The result for $n=2$ appears in~\cite[Theorem~2.5]{Farb-Margalit:APrimerOnMCG} while the result for $n=3$ follows from work of Hatcher~\cite{Hatcher:HomeoLarge3Mfds}.)

\begin{remark}
Given a rod complement in the $3$-torus that contains an $(a,b,c)$-rod $R$, there exists an element of $\GL(3, \ZZ)$ that sends $(a, b, c)$ to $(0, 0, 1)$. By \cref{Lem:GLnZ}, we may change the fundamental region of the \mbox{$3$-torus} to ensure that $R$ is a $(0,0,1)$-rod. In the rest of the paper, we often assume without loss of generality that one of the rods in a rod complement has direction vector $(0,0,1)$.
\end{remark}

\begin{lemma} [B\'ezout's lemma in $n$-dimensions] \label{Lem:gcdAndDet}
Let $n\geq 2$ be an integer. Suppose that $\vv{{a_n}} = (a_{1n}, a_{2n}, \ldots, a_{nn})^\intercal$ is a nonzero vector in $\ZZ^n \subset \RR^n$ with $\gcd(a_{1n}, a_{2n}, \ldots, a_{nn}) = 1$. Then there exist vectors $\vv{a}_1, \vv{a}_2, \ldots, \vv{a}_{n-1}$ in $\ZZ^n$ such that $\det(\vv{a}_1, \vv{a}_2, \ldots, \vv{a}_n) = 1$.
\end{lemma}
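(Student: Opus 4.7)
I would prove this by induction on $n$. The base case $n=2$ is precisely the classical B\'ezout identity: given $\gcd(a_{12}, a_{22}) = 1$, there exist integers $x, y$ with $a_{22} x + a_{12}(-y) = 1$, and then the vector $\vv{a}_1 = (y,x)^\intercal \in \ZZ^2$ satisfies $\det(\vv{a}_1, \vv{a}_2) = 1$.

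For the inductive step, assume the lemma for dimension $n-1$, and let $\vv{a}_n \in \ZZ^n$ be primitive. Set $d \coloneq \gcd(a_{1n}, a_{2n}, \ldots, a_{(n-1)n})$. Consider two cases. If $d = 0$, then $a_{1n} = \cdots = a_{(n-1)n} = 0$, so $a_{nn} = \pm 1$; choose $\vv{a}_i = \vv{e}_i$ for $i = 1, \ldots, n-1$ (adjusting the sign of, say, $\vv{a}_1$ if $a_{nn} = -1$). If $d \neq 0$, write $a_{in} = d \, b_i$ for $i < n$, so that the vector $\vv{b} = (b_1, \ldots, b_{n-1})^\intercal$ is primitive in $\ZZ^{n-1}$ and $\gcd(d, a_{nn}) = 1$.

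By the inductive hypothesis there exist $\vv{b}_1, \ldots, \vv{b}_{n-2} \in \ZZ^{n-1}$ with $\det(\vv{b}_1, \ldots, \vv{b}_{n-2}, \vv{b}) = 1$, and by B\'ezout there exist integers $x, y$ with $dx + a_{nn} y = 1$. I would then set
\[
\vv{a}_i = \begin{pmatrix} \vv{b}_i \\ 0 \end{pmatrix} \text{ for } i = 1, \ldots, n-2, \qquad \vv{a}_{n-1} = \begin{pmatrix} y \, \vv{b} \\ -x \end{pmatrix}, \qquad \vv{a}_n = \begin{pmatrix} d \, \vv{b} \\ a_{nn} \end{pmatrix}.
\]
Expanding the determinant of $(\vv{a}_1, \ldots, \vv{a}_n)$ along the last row (which has only two nonzero entries, $-x$ in column $n-1$ and $a_{nn}$ in column $n$) and using multilinearity together with $\det(\vv{b}_1, \ldots, \vv{b}_{n-2}, \vv{b}) = 1$ should yield $dx + a_{nn} y = 1$.

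There is no real obstacle here: the only thing to be careful about is the sign bookkeeping in the cofactor expansion, which is why I chose $\vv{a}_{n-1}$ with the specific signs above. Once that is handled, the induction closes cleanly.
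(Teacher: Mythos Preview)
Your proof is correct and follows essentially the same approach as the paper's: both argue by induction on $n$, apply the inductive hypothesis to the primitive $(n-1)$-vector obtained by dividing out the gcd of all but one coordinate, and use the classical B\'ezout identity on that gcd and the remaining coordinate to build the final column. The only differences are cosmetic---the paper splits off the \emph{first} coordinate (taking $d=\gcd(a_{2n},\ldots,a_{nn})$), places the special B\'ezout column as $\vv{a}_1$, and expands along the first row, whereas you split off the last coordinate, place the B\'ezout column as $\vv{a}_{n-1}$, and expand along the last row; also, the paper disposes of the degenerate case via ``WLOG $a_{nn}\neq 0$'' rather than your explicit $d=0$ case.
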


\begin{proof}
We prove the result by induction on $n$. Suppose that $\vv{a}_2 = (a_{12}, a_{22})^\intercal$ is a nonzero vector in $\ZZ^2$ with $\gcd(a_{12},a_{22}) = 1$. By B\'{e}zout's lemma, there exist integers $a_{11}$, $a_{21}$ such that $a_{11}a_{22}-a_{21}a_{12} = 1$. So defining $\vv{a}_1 = (a_{11}, a_{21})^\intercal$ leads to $\det(\vv{a}_1, \vv{a}_2) = 1$. This proves the base case $n = 2$.

Now let $n \geq 3$ be an integer. Suppose that $\vv{a}_n = (a_{1n}, a_{2n}, \ldots, a_{nn})^\intercal$ is a nonzero vector in $\ZZ^n$ with $\gcd(a_{1n}, a_{2n}, \ldots, a_{nn}) = 1$. Without loss of generality, suppose that $a_{nn} \neq 0$ so that the vector $\svv{a}_n \coloneq (a_{2n}, a_{3n}, \ldots, a_{nn})^\intercal$ is nonzero. Let 
\[
d \coloneq \gcd \left( a_{2n}, a_{3n}, \ldots, a_{nn} \right).
\]
Since $\gcd(a_{1n}, d) = \gcd(a_{1n}, a_{2n}, \ldots, a_{nn}) = 1$, by B\'{e}zout's lemma, there exist integers $s$ and $t$ such that $s d - t a_{1n} = 1$.

Set $a_{11} = s$ and 
\[
(a_{21}, a_{31}, \ldots, a_{n1}) \coloneq \frac{t}{d} \, \svv{a}_n^{\,\intercal} = \frac{t}{d} \left( a_{2n}, a_{3n}, \ldots, a_{nn} \right).
\]
Since $\frac{1}{d} \, \svv{a}_n \in \ZZ^{n-1}$ and $\gcd \left( \frac{a_{2n}}{d}, \frac{a_{3n}}{d}, \ldots, \frac{a_{nn}}{d} \right) = 1$, by induction there exist $\svv{a}_2, \svv{a}_3, \ldots, \svv{a}_{n-1}$ in $\ZZ^{n-1}$ such that $\det(\svv{a}_2, \svv{a}_3, \ldots, \svv{a}_{n-1}, \frac{1}{d} \, \svv{a}_n) = 1$.

Now define $\vv{a}_1 \coloneq (s, \frac{t}{d} \, \svv{a}_n^{\,\intercal})^\intercal, \vv{a}_2 \coloneq (0, \svv{a}_2^{\,\intercal})^\intercal, \ldots, \vv{a}_{n-1} \coloneq (0, \svv{a}_{n-1}^{\,\intercal})^\intercal$. Then by expanding along the first row, we find that
\begin{align*}
& \det( \vv{a}_1, \vv{a}_2, \ldots, \vv{a}_{n-1}, \vv{a}_n) \\
={}& a_{11} \det(\svv{a}_2, \ldots, \svv{a}_{n-1}, \svv{a}_{n}) + (-1)^{1+n} a_{1n} \det (\tfrac{t}{d} \, \svv{a}_n, \svv{a}_2, \ldots, \svv{a}_{n-1}) \\
={}& s d \, \det(\svv{a}_2, \ldots, \svv{a}_{n-1}, \tfrac{1}{d} \, \svv{a}_n) + (-1)^{(1+n)+(n-2)}a_{1n} t \, \det(\svv{a}_2, \ldots, \svv{a}_{n-1}, \tfrac{1}{d} \, \svv{a}_n) \\
={}& sd - a_{1n} t \\
={}& 1.
\end{align*}
This concludes the induction.
\end{proof}

\begin{proposition} \label{Prop:OneRodHomeo}
For fixed $n \geq 2$, all 1-rod complements in the $n$-torus are homeomorphic.
\end{proposition}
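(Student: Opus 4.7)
The plan is to reduce an arbitrary 1-rod complement to a standard one (say, the complement of the $(0,0,\ldots,0,1)$-rod through the origin) by applying two kinds of homeomorphisms of $\TT^n$: a translation to move the rod through the origin, and then a linear homeomorphism coming from $\GL(n, \ZZ)$ to straighten out its direction vector.

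More concretely, let $R$ be a rod in $\TT^n$ with direction vector $\vv{a}_n = (a_{1n}, \ldots, a_{nn})^\intercal$, where $\gcd(a_{1n}, \ldots, a_{nn}) = 1$. First I would use the fact that $\TT^n$ acts on itself by translations (all of which are self-homeomorphisms) to move $R$ to a rod passing through the origin, without changing its direction vector. Next, I would apply \cref{Lem:gcdAndDet} to produce vectors $\vv{a}_1, \ldots, \vv{a}_{n-1} \in \ZZ^n$ such that the matrix $A \coloneq (\vv{a}_1, \ldots, \vv{a}_{n-1}, \vv{a}_n)$ has $\det A = 1$, so $A \in \GL(n, \ZZ)$. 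By \cref{Lem:GLnZ}, $A$ induces a homeomorphism of $\TT^n$; by construction, this homeomorphism sends the line through the origin with direction $(0, \ldots, 0, 1)^\intercal$ to the line through the origin with direction $\vv{a}_n$, and hence sends the standard $(0,0,\ldots,0,1)$-rod to $R$.

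Composing the translation with this linear homeomorphism yields a self-homeomorphism of $\TT^n$ taking the standard rod through the origin onto $R$. Restricting to complements then gives the desired homeomorphism between the complement of $R$ and the complement of the standard $(0,0,\ldots,0,1)$-rod through the origin, which is independent of $R$. Since every 1-rod complement is thereby homeomorphic to this fixed reference manifold, they are all homeomorphic to one another.

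The only nontrivial ingredient is producing the matrix $A \in \GL(n, \ZZ)$ with prescribed last column, and this is exactly what \cref{Lem:gcdAndDet} provides; no further obstacle is expected.
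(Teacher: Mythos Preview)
Your proposal is correct and follows essentially the same approach as the paper's own proof: translate the rod through the origin, invoke \cref{Lem:gcdAndDet} to build a matrix in $\GL(n,\ZZ)$ with the rod's direction vector as its last column, and then apply \cref{Lem:GLnZ} to obtain a homeomorphism of $\TT^n$ carrying the standard $(0,\ldots,0,1)$-rod onto $R$. The paper's argument is slightly terser but structurally identical.
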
 

\begin{proof}
Let $R$ be a rod in the $n$-torus whose fundamental region is $[0,1]^n$. Suppose $\vv{a}_n = (a_{1n}, a_{2n}, \ldots, a_{nn})^\intercal$ is the direction vector of $R$. We may translate the rod $R$ so that it intersects the origin. As $R$ is a simple closed curve, we must have $\gcd(a_{1n}, a_{2n}, \ldots, a_{nn}) = 1$. By \cref{Lem:gcdAndDet}, there exist vectors $\vv{a}_1, \vv{a}_2, \ldots, \vv{a}_{n-1}$ in $\ZZ^n\subset \RR^n$ such that 
\[
\det(\vv{a}_1, \vv{a}_2, \ldots, \vv{a}_n) = 1.
\] 
Hence, the matrix $(\vv{a}_1, \vv{a}_2, \ldots, \vv{a}_n)$ lies in $\GL(n,\ZZ)$ and by \cref{Lem:GLnZ}, it induces a homeomorphism that maps the $(0, 0, \ldots, 0,1)$-rod to the $\vv{a}_n$-rod. Therefore, any 1-rod complement $\TT^n \setminus R$ is homeomorphic to $\TT^n \setminus R_z$, where $R_z$ represents a standard $(0, 0, \ldots, 0, 1)$-rod.
\end{proof}

\section{Volume bounds for all rod complements} \label{Sec:VolBounds} 

In this section, we obtain upper and lower bounds on the volumes of all hyperbolic rod complements.

\begin{proposition} \label{Prop:AsSFSpace}
An $n$-rod complement in the $3$-torus with $k \geq 1$ parallel rods is an $(n-k)$-rod complement in the Seifert fibred space $\TT^2_{k} \times \SS^1$, where $\TT^2_{k}$ is a torus with $k$ punctures. 
\end{proposition}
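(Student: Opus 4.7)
The plan is to reduce to the case where the $k$ parallel rods are all standard $(0,0,1)$-rods by applying a change of fundamental domain, and then observe directly that the complement of such rods in $\TT^3$ factors as a product.

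First, by hypothesis, the $k$ parallel rods share a common direction vector $(a,b,c) \in \ZZ^3$ with $\gcd(a,b,c)=1$. Invoking \cref{Lem:gcdAndDet}, I can extend $(a,b,c)^\intercal$ to a matrix in $\GL(3,\ZZ)$, which by \cref{Lem:GLnZ} induces a self-homeomorphism of $\TT^3$ mapping the $(0,0,1)$-rod through the origin to an $(a,b,c)$-rod through the origin. Conjugating the entire configuration by this homeomorphism, I may assume without loss of generality that the $k$ parallel rods all have direction vector $(0,0,1)$. (This preserves the rod complement up to homeomorphism, and sends the remaining $n-k$ rods to rods in the new fundamental domain, which still meet the hypotheses defining an $(n-k)$-rod configuration since rod-ness is intrinsic.)

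Next, I decompose $\TT^3 = \TT^2 \times \SS^1$, where the $\SS^1$ factor corresponds to the third coordinate. A $(0,0,1)$-rod is precisely a set of the form $\{p\} \times \SS^1$ for some $p \in \TT^2$. The $k$ parallel $(0,0,1)$-rods are therefore $\{p_1\}\times\SS^1, \ldots, \{p_k\}\times\SS^1$ for distinct points $p_1,\ldots,p_k \in \TT^2$ (distinctness follows because the rods are disjoint). Removing these $k$ rods yields
\[
\TT^3 \setminus \bigsqcup_{i=1}^{k} \bigl(\{p_i\} \times \SS^1\bigr) \;=\; \bigl(\TT^2 \setminus \{p_1,\ldots,p_k\}\bigr) \times \SS^1 \;=\; \TT^2_k \times \SS^1,
\]
which is a Seifert fibred space with fibres $\{x\} \times \SS^1$ over $x \in \TT^2_k$.

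Finally, the remaining $n-k$ rods sit inside this Seifert fibred space as disjoint simple closed geodesics (with respect to the flat metric inherited from the original unit cube), and the full rod complement
\[
\TT^3 \setminus \bigl(R_1 \cup \cdots \cup R_n\bigr) \;=\; \bigl(\TT^2_k \times \SS^1\bigr) \setminus \bigl(R_{k+1} \cup \cdots \cup R_n\bigr)
\]
is therefore an $(n-k)$-rod complement in $\TT^2_k \times \SS^1$, as claimed. The only mild subtlety is verifying that the initial $\GL(3,\ZZ)$ change of coordinates does not disturb the description of the remaining rods as rods; this is immediate because $\GL(3,\ZZ)$ sends the integer lattice in $\RR^3$ to itself, hence sends Euclidean lines of rational slope to Euclidean lines of rational slope.
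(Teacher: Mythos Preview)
Your proof is correct and follows essentially the same approach as the paper's: both use \cref{Lem:gcdAndDet} and \cref{Lem:GLnZ} to change coordinates so that the $k$ parallel rods become fibres of the obvious product structure $\TT^2 \times \SS^1$, and then read off the $(n-k)$-rod complement in $\TT^2_k \times \SS^1$. The only cosmetic difference is that the paper leaves the rods in place and describes the product structure in the new basis, whereas you apply the inverse homeomorphism to move the rods to standard position first; these are equivalent.
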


\begin{proof}
Let $M$ be an $n$-rod complement in the $3$-torus with $k$ parallel rods $R_1, R_2, \ldots, R_k$. Suppose that these parallel rods have direction vector $(a, b, c)$, where $a, b, c$ are integers such that $\gcd(a, b, c) = 1$. By \cref{Lem:gcdAndDet}, there exist integers $f, g, h, p, q, r$ such that 
\[
\det\begin{pmatrix}
a & f & p\\ 
b & g & q\\ 
c & h & r 
\end{pmatrix} = 1
\qquad \Rightarrow \qquad
\begin{pmatrix}
a & f & p\\ 
b & g & q\\ 
c & h & r 
\end{pmatrix} \in \GL(3,\ZZ).
\]
By \cref{Lem:GLnZ}, such a matrix represents an orientation-preserving homeomorphism of $\TT^3$ sending the rods with direction vectors $(1,0,0)$, $(0,1,0)$, $(0,0,1)$ to rods with direction vectors $(a,b,c)$, $(f,g,h)$, $(p,q,r)$, respectively.

Define $T \subset \TT^3$ to be a 2-torus spanned by the vectors $(f,g,h)$ and $(p,q,r)$. Note that $T\setminus (R_1\cup R_2\cup \cdots \cup R_k)$ is a $k$-punctured torus. As the homeomorphism represented by the above matrix sends the standard fundamental region of the $3$-torus to the fundamental region spanned by the vectors $(a,b,c)$, $(f,g,h)$, $(p,q,r)$, $M$ is homeomorphic to an $(n-k)$-rod complement in the Seifert fibred space $T \setminus (R_1\cup R_2 \cup \cdots \cup R_k) \times \SS^1$. 
\end{proof}

\begin{theorem} \label{Thm:VolBoundsDet}
\Paste{VolBoundsDet}
\end{theorem}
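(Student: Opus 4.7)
The plan is to establish the lower and upper bounds separately, invoking cited results from the literature to handle each side.

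For the lower bound, I would invoke Adams' theorem from~\cite{Adams:VolNCuspedMfds}, which asserts that any cusped hyperbolic $3$-manifold with $n$ cusps has volume strictly greater than $n \, \vtet$. Since $\TT^3$ is orientable, so is the complement $M$, and each of the $n$ disjoint rods contributes one toroidal cusp to $M$. The strict inequality $n \, \vtet < \vol(M)$ then follows immediately.

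For the upper bound, I would first apply \cref{Prop:AsSFSpace} to view $M$ as a $k$-rod complement in the Seifert fibred space $\TT^2_{n-k} \times \SS^1$. Here the $\SS^1$ fibre direction corresponds to $(0,0,1)$, and the $n-k$ punctures on the base torus arise from the parallel $(0,0,1)$-rods $R_{k+1}, \ldots, R_n$. The remaining rods $R_1, \ldots, R_k$ are transverse to the fibration and project to connected immersed closed curves on $\TT^2$ representing the classes $(p_1, q_1), \ldots, (p_k, q_k)$ in $H_1(\TT^2; \ZZ) \cong \ZZ^2$, and these projections avoid the punctures by disjointness of the rods in $\TT^3$.

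Next, I would apply \cite[Theorem~1.5]{Cremaschi-RodriguezMigueles:HypOfLinkCpmInSFSpaces}, which bounds the hyperbolic volume of a link complement in a Seifert fibred space by $8 \, \vtet$ times the total intersection count (geometric intersections plus self-intersections) of the link projection. In our setting, two distinct curves on $\TT^2$ in the classes $(p_i, q_i)$ and $(p_j, q_j)$ have minimum geometric intersection $\abs{p_i q_j - p_j q_i}$, while a connected immersed circle representing a class $(p_i, q_i)$ with $\gcd(p_i, q_i) = d$ is a $d$-fold cover of a primitive simple closed curve and has minimum self-intersection number $d - 1$. Summing these contributions gives exactly the right-hand side of the claimed inequality.

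The main obstacle will be verifying that the precise combinatorial quantity appearing in \cite[Theorem~1.5]{Cremaschi-RodriguezMigueles:HypOfLinkCpmInSFSpaces}, together with the constant $8 \, \vtet$, matches the sum on the right-hand side of the inequality. A subtle but important point is that each rod $R_i$ is a single connected geodesic in $\TT^3$, so its projection is a connected (generally non-simple) immersed circle on $\TT^2$ rather than a disjoint union of parallel simple closed curves; this is what forces the $\gcd(p_i, q_i) - 1$ self-intersection contribution to appear in the bound, rather than the $0$ one would obtain from a parallel-copies realization.
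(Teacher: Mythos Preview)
Your proposal is correct and follows essentially the same route as the paper: Adams' cusp bound for the lower inequality, and \cref{Prop:AsSFSpace} together with \cite[Theorem~1.5]{Cremaschi-RodriguezMigueles:HypOfLinkCpmInSFSpaces} for the upper inequality, with the same intersection count $\sum_{i<j}|p_iq_j-p_jq_i|+\sum_i(\gcd(p_i,q_i)-1)$. The only minor point you glossed over is that Adams' strict inequality $\vol(M)>n\,\vtet$ is stated for $n\geq 3$; the paper notes this and observes that hyperbolicity of a rod complement forces $n\geq 3$ via \cref{Thm:ClassifyingT3Rods}.
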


\begin{proof}
From \cref{Thm:ClassifyingT3Rods}, we deduce that $n \geq 3$. Adams proved that an $n$-cusped hyperbolic 3-manifold $M$ with $n \geq 3$ satisfies the inequality $\vol(M) > n \, \vtet$, which is the desired lower bound~\cite[Theorem~3.4]{Adams:VolNCuspedMfds}.

We obtain the upper bound using a result of Cremaschi and Rodr\'{\i}guez-Migueles~\cite[Theorem~1.5]{Cremaschi-RodriguezMigueles:HypOfLinkCpmInSFSpaces}. They prove that for a link $\overline{\calL}$ in an orientable Seifert fibred space $N$ over a hyperbolic 2-orbifold~$O$ in which $\calL$ projects injectively to a filling geodesic multi-curve $\calL \subseteq O$, one has the volume bound
\[
\vol(N \setminus \overline{\calL}) < 8 \, \vtet \, i(\calL, \calL).
\]
Here, $i(\calL, \calL)$ denotes the geometric self-intersection number of $\calL$.

In our particular setting, \cref{Prop:AsSFSpace} asserts that $M$ is homeomorphic to a $k$-rod complement in the Seifert fibred space 
\[
N = \TT^2_{n-k} \times \SS^1,
\] 
where $\TT^2_{n-k} \coloneq \left( T \setminus (R_{k+1} \cup R_{k+2} \cup \cdots \cup R_n) \right)$ with $T \subseteq \TT^3$ being the $2$-torus with fundamental region equal to the unit square $[0,1]^2$ in the $xy$-plane. Observe that $T$ is a 2-torus such that the intersection number between $R_n$ and $T$ is 1. Denote by~$\overline{\calL}$ the $k$-component link $R_1 \cup R_2 \cup \cdots \cup R_k$ in $N$. Here, $R_i$ is a $(p_i, q_i, z_i)$-rod with $(p_i, q_i) \neq (0, 0)$ for $i = 1, 2, \ldots, k$.

Let $\mathcal{P} \from N \to \TT^2_{n-k}$ be the bundle projection map. Note that the link~$\overline{\calL}$ projects to $\calL$, a union of $k$ rods in the base space $\TT^2_{n-k}$, which is a 2-torus in~$\TT^3$ with $n-k$ punctures. The rod $R_i$ projects to a $(p_i,q_i)$-curve on this punctured torus.

After a small deformation of the rods, we may ensure that their projections intersect transversely, with at most two arcs meeting at each intersection point. Since the number of rods is finite, we can also ensure, up to small deformation, that 
\begin{enumerate}
    \item any pair of projections $\calP(R_i)$ and $\calP(R_j)$ intersect exactly $\abs{p_iq_j-p_jq_i}$ times; see for example~\cite[Section~1.2.3]{Farb-Margalit:APrimerOnMCG}; and
    \item the $(p_i,q_i)$-curve $\calP(R_i)$ intersects itself exactly $\gcd(p_i, q_i) - 1$ times. 
\end{enumerate}

Item (2) above can be seen as follows. Consider a $(p_i,q_i)$-curve $\gamma$ with $d_i\coloneqq\gcd(p_i, q_i)>1$. Up to homeomorphism of the torus, $\gamma$ is equivalent to the $(d_i,0)$-curve, for which a representative consists of $d_i-1$ horizontal arcs connected by a single arc running across, meeting $(d_i-1)$ strands. 


Hence, the total geometric intersection number of $\calP(\calL)$ is 
\[
\sum_{1 \leq i < j \leq k} \abs{p_i q_j - p_j q_i} + \sum_{1 \leq i \leq k} \left( \gcd(p_i,q_i) - 1 \right).
\]
Thus, applying the result of Cremaschi and Rodr\'{\i}guez-Migueles leads to the upper bound.
\end{proof}

\begin{remark}
The upper volume bound in \cref{Thm:VolBoundsDet} depends on the choice of rod that is sent to the $(0,0,1)$-rod via a homeomorphism of $\TT^3$. For example, if we consider four rods $R_1, R_2, R_3, R_4$ with direction vectors $(2,4,3), (5,7,1), (9,8,6), (0,0,1)$, respectively, \cref{Thm:VolBoundsDet} will give us an upper volume bound $8 \, \vtet \times 50$. Using the constructive proof of \cref{Lem:gcdAndDet}, we obtain the following matrices in $\GL(3,\ZZ)$ that map $(0,0,1)$ to $R_1, R_2, R_3$, respectively.
\[
\begin{pmatrix}
1 & 0 & 2\\
0 & -1 & 4\\
0 & -1 & 3
\end{pmatrix} \qquad 
\begin{pmatrix}
1 & 0 & 5\\
0 & 1 & 7\\
0 & 0 & 1
\end{pmatrix} \qquad
\begin{pmatrix}
-4 & 0 & 9\\
-4 & -1 & 8\\
-3 & -1 & 6
\end{pmatrix}
\]
By taking the inverses of these matrices and computing the new rod parameters, we now obtain upper volume bounds of $8 \, \vtet \times 116$, $8 \, \vtet \times 114$, and $8 \, \vtet \times 132$, respectively. The minimum among all such choices naturally provides an upper bound. 
\end{remark}

\section{Nested annular Dehn filling in the 3-torus} \label{Sec:Nested} 

We will show that neither the upper nor lower bound of \cref{Thm:VolBoundsDet} can be part of a two-sided coarse volume bound in terms of the given parameters. That is, we exhibit a family of rod complements with fixed number of cusps whose volumes grow to infinity as well as a family of rod complements with bounded volume for which the intersection number in the upper bound of \cref{Thm:VolBoundsDet} grows to infinity. For both of these results, we use the machinery of annular Dehn filling.

\begin{figure} 
\includegraphics[scale=0.75]{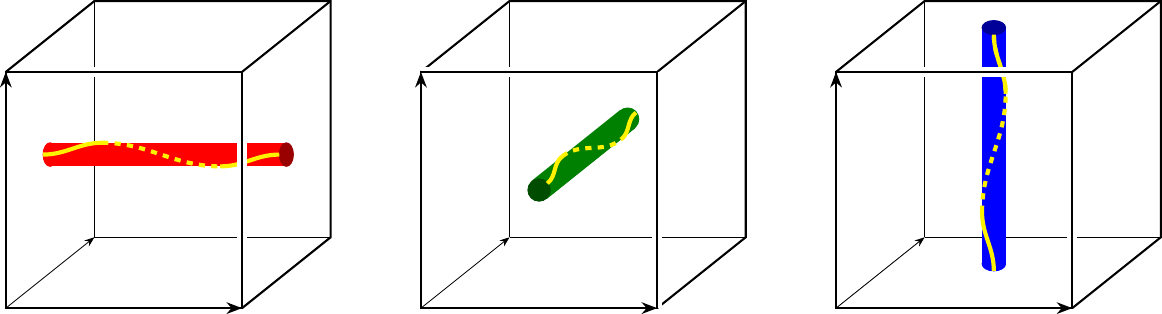}
\caption{A $(1,1)$-curve on the standard rods with direction vectors $(1,0,0)$, $(0,1,0)$ and $(0,0,1)$, respectively.}
\label{Fig:100Rod11curve} 
\end{figure}

\begin{definition} [Annular Dehn filling]
Let $A$ be an annulus embedded in a 3-manifold~$M$, with boundary curves $L^+$ and $L^-$. Let $\mu^{\pm}$ denote a meridian of $\bdy N(L^{\pm})$ and let $\lambda^{\pm}$ denote a longitude of $\bdy N(L^{\pm})$ that is parallel to $\bdy A$. 

For an integer $n$, define \emph{$(1/n)$-annular Dehn surgery} to be the process of drilling $N(L^+)$ and $N(L^-)$ from $M$, performing $(+1/n)$-Dehn filling on $\bdy N(L^+)$ and performing $(-1/n)$-Dehn filling on $\bdy N(L^-)$.

The surgery can be realised by cutting along $A$, performing $n$ Dehn twists along the core of $A$ in the anticlockwise direction (where the induced orientation puts $L^+$ on the right of the core of $A$), and then regluing; see for example~\cite[Section~2.3]{Baker}.

If the curves $L^{\pm}$ are already drilled, such as in the case of a link complement, define \emph{$(1/n)$-annular Dehn filling along $A$} to be the process of performing $(+1/n)$-Dehn filling on $L^+$ and performing $(-1/n)$-Dehn filling on $L^-$, where the framing on the link components is as above.
\end{definition}

In our case, we perform annular Dehn filling on an annulus bounded by a pair of parallel rods in the $3$-torus. Note that parallel rods bound many annuli in $\TT^3$. The following result confirms that the resulting link is well defined, regardless of our choice of annulus.

\begin{lemma} \label{Lem:AnnularDFIndependentA}
Let $R^+$ and $R^-$ be parallel rods in $\TT^3$ that form the boundary of two non-isotopic annuli $A^+$ and $A^-$ with disjoint interiors. Suppose that $A^+$ is the annulus oriented with $R^+$ on the right of the core, under the induced orientation from $\TT^3$. Then $(1/n)$-annular Dehn filling on $A^+$ and $(-1/n)$-annular Dehn filling on $A^-$ result in homeomorphic manifolds.

More generally, suppose that $A_1$ and $A_2$ are disjoint annuli with $A_1$ cobounded by rods $R_0$ and $R_1$, with $R_1$ to the right, and $A_2$ cobounded by $R_1$ and a rod $R_2$, with $R_1$ to the left. Let $M$ be the result of performing $(1/n)$-annular Dehn filling on $A_1$ followed by $(1/m)$-annular Dehn filling on $A_2$. Then $M$ is also the result of performing $(-1/n)$-Dehn filling on $R_0$, followed by $(1/(n-m))$-Dehn filling on $R_1$, followed by $(1/m)$-Dehn filling on $R_2$, when $R_0 \neq R_2$. If $R_0=R_2$, then the Dehn filling coefficient on $R_0=R_2$ is $1/(m-n)$.
\end{lemma}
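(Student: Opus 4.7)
The plan is to unpack annular Dehn filling into explicit Dehn fillings on the boundary tori of the rod complement, and to track how the framing on each cusp transforms as the operations compose. The central observation is that, after Dehn-filling $\bdy N(R)$ with filling curve $\gamma$, re-drilling the new core exposes a boundary torus whose meridian is exactly $\gamma$ in the original basis of $H_1(\bdy N(R))$. Any subsequent filling thus translates via a linear change of coordinates into a specific slope measured in the original framing. A second ingredient is that all annular longitudes on a single $\bdy N(R)$ share the slope of the axial longitude, so the signs of the coordinate transitions are governed entirely by the orientation conventions on the annuli.

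For the first statement, I may apply \cref{Lem:GLnZ} to reduce to the case where $R^\pm$ are $(0,0,1)$-rods. The rod complement then takes the form of the twice-punctured torus times $\SS^1$, and each $A^\pm$ is (up to isotopy) a product of an arc in $\TT^2$ with $\SS^1$. Consequently, $\lambda^\pm_{A^+}$ and $\lambda^\pm_{A^-}$ are all axial longitudes on the corresponding boundary tori. Because $R^+$ lies on the right of $A^+$'s core but on the left of $A^-$'s core, the roles of $L^+$ and $L^-$ are swapped between the two surgeries; this swap is compensated by the sign change from $(1/n)$ to $(-1/n)$, so the two surgeries prescribe identical Dehn filling slopes on both $\bdy N(R^+)$ and $\bdy N(R^-)$, yielding homeomorphic manifolds.

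For the general statement, I perform the two annular surgeries in sequence from $\TT^3 \setminus (R_0 \cup R_1 \cup R_2)$. The first surgery fills $\bdy N(R_0)$ and $\bdy N(R_1)$ with slopes $-1/n$ and $+1/n$ relative to $\lambda^0_{A_1}$ and $\lambda^1_{A_1}$. Re-drilling $R_1$ for the second surgery then exposes a boundary torus whose meridian is $\mu^1 + n\lambda^1_{A_1}$ in the original basis. The second surgery fills this boundary with slope $-1/m$ in the new basis $(\mu^1_{\text{new}}, \lambda^1_{A_2})$. Using that $\lambda^1_{A_2}$ shares the slope of $\lambda^1_{A_1}$, a $2 \times 2$ change-of-basis computation translates the second filling curve into $-\mu^1 + (m-n)\lambda^1_{A_1}$, corresponding to Dehn slope $1/(n-m)$. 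Meanwhile $\bdy N(R_2)$ receives slope $+1/m$ from the second surgery alone, and $\bdy N(R_0)$ retains slope $-1/n$, matching the lemma.

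The case $R_0 = R_2$ is handled analogously by applying both surgeries to the single shared cusp $\bdy N(R_0) = \bdy N(R_2)$. The shift from $1/(n-m)$ to $1/(m-n)$ reflects the fact that $R_0$ lies on the left of $A_1$'s core while $R_2 = R_0$ lies on the right of $A_2$'s core, producing the opposite sign relationship between $\lambda^0_{A_1}$ and $\lambda^2_{A_2}$ compared with the $R_1$ case. The main obstacle is the careful bookkeeping of orientations and of the change of meridian after each filling; once those conventions are fixed, the argument reduces to routine linear algebra in $H_1$ of each boundary torus.
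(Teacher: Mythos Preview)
Your proposal is correct and follows essentially the same approach as the paper: both arguments reduce the lemma to verifying that the Dehn filling slopes induced on each boundary torus agree. The paper's proof is terser, simply observing (via the cut-and-twist realisation of annular Dehn filling) that the two operations on $R_1$ add $n$ longitudes and then subtract $m$ longitudes to the meridian, while you make the composition explicit by re-drilling the surgered core and carrying out a change-of-basis computation in $H_1(\bdy N(R_1))$; the underlying content is the same.
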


\begin{proof}
Let $N^+$ be the manifold obtained by $1/n$-annular Dehn filling $A^+$ and let $N^-$ be the manifold obtained by $-1/n$-annular Dehn filling $A^-$. The fact that $N^+$ and $N^-$ are homeomorphic follows from the fact that the link complements have the same Dehn surgery coefficients. Thus, the results of the Dehn fillings must be homeomorphic.

To prove the more general statement, we again consider the Dehn surgery coefficients. Annular Dehn filling first along $A_1$ gives surgery slope $\mu + n\lambda$ on $R_1$ and $\mu - n \lambda$ on $R_0$, where $\mu$ denotes a meridian and $\lambda$ is parallel to $\bdy A_1$. Then performing $1/m$-annular Dehn filling along $A_2$ adjusts the surgery slope on $R_1$ by subtracting $m$ longitudes, giving $\mu + (n-m)\lambda$. It gives a surgery slope of $\mu +m\lambda$ on $R_2$ when $R_0 \neq R_2$. When $R_0 = R_2$, the slopes combine as on $R_1$ to give $\mu - (n-m)\lambda$. 
\end{proof}

\begin{definition}
Let $m$ be an even positive integer. Consider a unit cube fundamental region of $\TT^3$. For each $i = 1, 2, \ldots, \frac{m}{2}$, let $(R_{2i-1}^+, R_{2i-1}^-)$ be a pair of $(1,0,0)$-rods bounding a vertical $xz$-plane annulus within the unit cube, with $R_{2i-1}^+$ above and $R_{2i-1}^-$ below. Let $(R_{2i}^-, R_{2i}^+)$ be a pair of $(0,1,0)$-rods bounding a vertical $yz$-plane annulus with $R_{2i}^-$ above and $R_{2i}^+$ below. A rod $R$ is said to be \emph{sandwiched along the $xy$-plane by nested pairs of rods with order} $(R_1^{+}, R_2^{-}, \ldots, R_{m-1}^{+}, R_{m}^{-})$ if and only if $R$ lies in an $xy$-plane and the rods are positioned from top to bottom in the unit cube in the order \[
(R_1^{+}, R_2^{-},\ldots, R_{m-1}^{+}, R_{m}^{-}, R, R_{m}^{+}, R_{m-1}^{-}, \ldots, R_2^{+}, R_1^{-}).
\]
Similarly, for $m$ an odd positive integer, we can say $R$ is \emph{sandwiched along the $xy$-plane by nested pairs of rods with order} $(R_1^{+}, R_2^{-}, \ldots, R_{m-1}^{-}, R_{m}^{+})$ if and only if $R$ lies in an $xy$-plane and the rods are positioned from top to bottom in the unit cube in the order
\[
(R_1^{+}, R_2^{-},\ldots, R_{m-1}^{-}, R_{m}^{+}, R, R_{m}^{-}, R_{m-1}^{+}, \ldots, R_2^{+}, R_1^{-}).
\]
\end{definition}

See the top-right picture of \cref{Fig:Nested5On3} for an example of a rod (black) sandwiched by nested pairs of rods with $m = 2$.

\begin{lemma} \label{Lem:ToGetpq0Rod}
Let $p$ and $q$ be integers with $\gcd(p,q)=1$. Suppose that $[c_1;c_2, \ldots, c_m]$ is a continued fraction expansion of $p/q$. If $m$ is even, consider a $(1,0,0)$-rod $R_x$ sandwiched along the $xy$-plane by nested pairs of rods with order 
\[
(R_1^{+}, R_2^{-}, \ldots, R_{m-1}^{+}, R_{m}^{-}).
\] 
If $m$ is odd, consider a $(0,1,0)$-rod $R_y$ sandwiched along the $xy$-plane by nested pairs of rods with order
\[
(R_1^{+}, R_2^{-}, \ldots, R_{m-1}^{-}, R_{m}^{+}).
\]
Sequentially apply $(1/c_i)$-annular Dehn filling to the pair $(R_{i}^+, R_{i}^-)$ of rods, starting with $i = m$ and ending with $i = 1$. Then the rod $R_x$ for $m$ is even (respectively, $R_y$ for $m$ odd) is transformed to a $(p,q,0)$-rod. 
\end{lemma}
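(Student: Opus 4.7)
The plan is to track the direction vector of the sandwiched rod as each annular Dehn filling is applied, by realising each filling as a power of a Dehn twist on a $2$-torus cross-section of $\TT^3$. Because the rod $R$ (standing for $R_x$ or $R_y$) lies in a plane of constant $z$, and the sandwiching condition places this $z$-level strictly between the heights of $R_i^+$ and $R_i^-$ for every $i$, the horizontal $2$-torus $T_R$ at height $z_R$ separates each pair $(R_i^+, R_i^-)$. Consequently, the annulus $A_i$ cobounded by the pair meets $T_R$ in a single simple closed curve, which is a $(1,0)$-curve when $i$ is odd (since $A_i$ lies in an $xz$-plane) and a $(0,1)$-curve when $i$ is even (since $A_i$ lies in a $yz$-plane). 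As the anticlockwise Dehn twist supported near $A_i$ acts by translation along the core of $A_i$, it restricts on $T_R$ to a Dehn twist along that curve; hence $(1/c_i)$-annular Dehn filling acts on $T_R$ as $c_i$ Dehn twists along the appropriate slope.

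Next I would invoke the standard computation that $c$ anticlockwise Dehn twists along the $(1,0)$-slope send a $(a,b)$-curve on the torus to $(a+cb,b)$, while $c$ twists along $(0,1)$ send $(a,b)$ to $(a, b+ca)$. Define partial continued fractions $P_k/Q_k = [c_k; c_{k+1}, \ldots, c_m]$, which satisfy the recursion $P_k = c_k P_{k+1} + Q_{k+1}$ and $Q_k = P_{k+1}$ with $P_m = c_m$, $Q_m = 1$. The key claim, proved by descending induction on $k$, is that after performing the annular Dehn fillings on pairs $m, m-1, \ldots, k$, the transformed rod has direction vector $(P_k, Q_k, 0)$ when $k$ is odd and $(Q_k, P_k, 0)$ when $k$ is even.

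For the base case $k = m$: if $m$ is odd then the initial direction $(0,1,0)$ transforms under $c_m$ twists along the $(1,0)$-slope into $(c_m, 1, 0) = (P_m, Q_m, 0)$, while if $m$ is even then $(1,0,0)$ transforms under $c_m$ twists along the $(0,1)$-slope into $(1, c_m, 0) = (Q_m, P_m, 0)$. For the inductive step with $k$ odd (and hence $k+1$ even), the current direction is $(Q_{k+1}, P_{k+1}, 0)$, and $c_k$ twists along the $(1,0)$-slope produce $(Q_{k+1} + c_k P_{k+1}, P_{k+1}, 0) = (P_k, Q_k, 0)$ by the recursion; the case $k$ even is symmetric. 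Since $k = 1$ is odd, the final direction is $(P_1, Q_1, 0) = (p,q,0)$, as required. The main obstacle is to correctly identify the restriction of each annular Dehn filling to $T_R$, in particular to verify that the anticlockwise-twist convention from the definition matches the positive form of the continued-fraction recursion, and that the rod remains in the plane $z = z_R$ throughout, so that each subsequent filling still meets $T_R$ in exactly one curve of the expected slope.
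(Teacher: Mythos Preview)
Your proposal is correct and follows essentially the same approach as the paper's proof: both track the direction vector of the sandwiched rod through the sequence of annular Dehn fillings, observing that the $i$th filling adds $c_i$ times one coordinate to the other (alternately, depending on parity), thereby building the continued fraction from the inside out. The paper carries this out by explicit computation of the first three steps and then appeals to ``continuing in this way'', whereas you package the same recursion cleanly via the partial convergents $P_k/Q_k$ and a descending induction; your framing through the restriction to the horizontal torus $T_R$ makes the Dehn-twist action explicit, but this is exactly the mechanism the paper is using implicitly when it counts intersections with the annulus.
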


\begin{proof}
We will focus on the case when the length $m$ of the continued fraction is odd. The argument for $m$ even follows similarly.

Starting with the $(0,1,0)$-rod $R_y$ and applying $(1/c_m)$-annular Dehn filling to $(R_m^+, R_m^-)$ transforms the $(0,1,0)$-rod $R_y$ to a $(c_m, 1, 0)$-rod $R^{(1)}$. See the first and second pictures of \cref{Fig:Nested5On3} for an example.

The $(c_m, 1, 0)$-rod $R^{(1)}$ intersects the annulus bounded by $R_{m-1}^-$ and $R_{m-1}^+$ a total of $c_m$ times. Applying $(1/c_{m-1})$-annular Dehn filling to $(R_{m-1}^+, R_{m-1}^-)$ transforms the $(c_m, 1, 0)$-rod $R^{(1)}$ into a $(c_m, 1+c_m c_{m-1}, 0)$-rod $R^{(2)}$. See the second and third pictures of \cref{Fig:Nested5On3} for an example. Observe that the ratio of the rod parameters satisfies
\[
\frac{1+c_m c_{m-1}}{c_m} = c_{m-1} + \cfrac{1}{c_m}.
\]

The $(c_m, 1+c_m c_{m-1}, 0)$-rod $R^{(2)}$ intersects the annulus bounded by $R_{m-2}^+$ and $R_{m-2}^-$ a total of $1+c_m c_{m-1}$ times. Applying $(1/c_{m-2})$-annular Dehn filling to $(R_{m-2}^+, R_{m-2}^-)$ transforms the $(c_m, 1+c_m c_{m-1},0)$-rod $R^{(2)}$ into a $(c_m + (1+c_m c_{m-1})c_{m-2}, 1+c_m c_{m-1},0)$-rod $R^{(3)}$. See the third and fourth pictures of \cref{Fig:Nested5On3} for an example. Now observe that the ratio of the rod parameters satisfies
\[
\frac{c_m + (1+c_m c_{m-1})c_{m-2}}{1+c_m c_{m-1}} = c_{m-2} + \frac{c_m}{1+c_m c_{m-1}} = c_{m-2} + \cfrac{1}{c_{m-1} + \cfrac{1}{c_m}}
\]

Continuing in this way, we apply $(1/c_{m-3})$-annular Dehn filling, $(1/c_{m-4})$-annular Dehn filling, and so on, until we finally apply $(1/c_1)$-annular Dehn filling. Each successive annular Dehn filling prepends a term to the continued fraction expansion for the ratio of the rod parameters. Hence, the final rod $R^{(m)}$ has direction vector $(p,q,0)$, where $p/q = [c_1; c_2, \ldots, c_m]$. 
\end{proof}

Note that \cref{Lem:ToGetpq0Rod} holds for any continued fraction expansion of $p/q$, without any restriction on the signs of the terms.

\begin{definition} \label{Def:NestedAnnDehnFilling}
Let $p$ and $q$ be integers with $\gcd(p,q) = 1$. Suppose that $[c_1; c_2, \ldots, c_m]$ is a continued fraction expansion of $p/q$. Define {\em $(p,q)$-nested annular Dehn filling} to be the process of performing the sequence of $(1/c_i)$-annular Dehn fillings from $i = m$ to $i=1$ on the rod $R_x$ or $R_y$, as described in \cref{Lem:ToGetpq0Rod}. The rod $R_x$ or $R_y$ is called the \emph{core rod} of the nested annular Dehn filling. The rods $R_i^+$ and $R_i^-$ for $i = 1, 2, \ldots, m$ are called the \emph{filling rods} of the nested annular Dehn filling.
\end{definition}

For example, consider $(p,q)$-nested annular Dehn filling with $(p,q) = (5,3)$, using the continued fraction expansion $p/q = 5/3 = [1;1,2]$. Since the number of terms is odd, we start with a $(0,1,0)$-rod $R_y$ sandwiched along the $xy$-plane by nested pairs of rods with order $(R_1^+, R_2^-, R_3^+)$, as shown in the top-left picture of \cref{Fig:Nested5On3}. After applying $(1/2)$-annular Dehn filling to the pair of innermost red rods $(R_3^+, R_3^-)$, we obtain the rod complement shown in the top-right picture of \cref{Fig:Nested5On3}. Then after applying $(1/1)$-annular Dehn filling to the pair of green rods $(R_2^+, R_2^-)$, we obtain the rod complement shown in the bottom-left picture of \cref{Fig:Nested5On3}. Finally, after applying a $(1/1)$-annular Dehn filling to the outermost pair of red rods $(R_1^+, R_1^-)$, we obtain the rod complement shown in the bottom-right picture of \cref{Fig:Nested5On3}. The result is a single rod with direction vector $(5,3,0)$.

\begin{figure}
\includegraphics[scale=0.75]{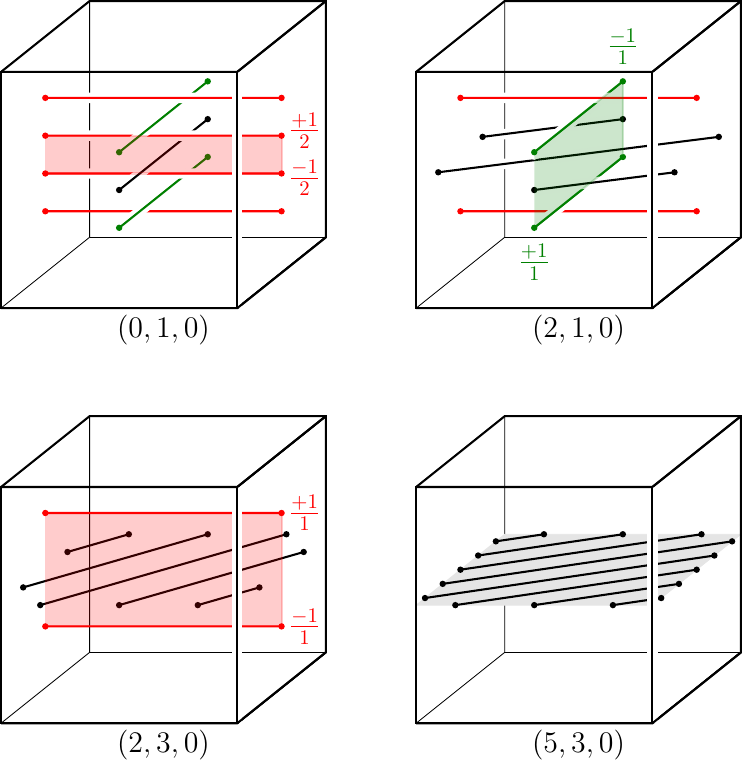}
\caption{The $(5,3)$-nested annular Dehn filling on the $(0,1,0)$-core rod. The vector under each $3$-torus is the direction vector of the corresponding black rod (up to ambient isotopy).}
\label{Fig:Nested5On3} 
\end{figure}

\begin{remark} 
Any rod that does not intersect the annulus used in annular Dehn filling is unaffected by the filling. In particular, such rods maintain their direction vectors. This straighforward observation is crucial for our use of annular Dehn fillings below.
\end{remark}

\section{Asymptotically sharp volume bounds} \label{Sec:3CuspedInfVol} 

With nested annular Dehn filling introduced in the last section, we can now proceed to show some asymptotically sharp volume bounds for a family of rod complements. 

\begin{lemma} \label{Lem:Hyperbolic}
Let $R_1, R_2, \ldots, R_n$ be disjoint rods in $\TT^3$ with $n \geq 3$. Suppose that $R_n$ has direction vector $(0, 0, 1)$ while each of the other rods $R_i$ has direction vector of the form $(p_i, q_i, 0)$. If any two neighbouring rods, ordered by $z$-coordinate, are not parallel, then the rod complement $\TT^3 \setminus (R_1 \cup R_2 \cup \cdots \cup R_n)$ is hyperbolic. 
\end{lemma}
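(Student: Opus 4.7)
The plan is to apply \cref{Thm:ClassifyingT3Rods}, which characterises when a rod complement is hyperbolic. This reduces the proof to verifying two conditions: (a) $\{R_1, \ldots, R_n\}$ contains three linearly independent rods, and (b) no pair of disjoint parallel rods is linearly isotopic in the complement of the others. Throughout I interpret ``neighbouring rods ordered by $z$-coordinate'' in the cyclic sense, since the $z$-coordinate is circle-valued on $\TT^3$; this is consistent with the cyclic formulation used in \cref{Thm:MainOrthogonalRods}.

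Condition (a) is immediate: since $n \geq 3$, there are at least two horizontal rods among $R_1, \ldots, R_{n-1}$, and any two cyclically adjacent ones have non-parallel direction vectors in the $xy$-plane, which therefore span it. Combined with the direction $(0,0,1)$ of $R_n$, this yields three linearly independent rod directions in $\RR^3$.

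For condition (b), any parallel pair must consist of two horizontal rods $R_i, R_j$ with common direction $(p,q,0)$ at distinct heights $z_i, z_j \in \TT$, since $R_n$ is the unique rod with nonzero $z$-component. The cyclic hypothesis forces $R_i$ and $R_j$ not to be cyclically adjacent, so on each of the two cyclic arcs in the $z$-circle between $z_i$ and $z_j$ there sits a horizontal rod non-parallel to $R_i$ — namely the cyclic neighbour of $R_i$ on that side. The main step is to show that every linear annulus $A$ between $R_i$ and $R_j$ is obstructed by such a rod. Any such $A$ lies on a linear 2-torus of $\TT^3$ containing both rods; parameterising transverse to $(p,q,0)$, the heights of the interpolating $(p,q,0)$-rods sweep out a linear arc of length $\abs{z_j - z_i + k}$ in the universal cover of the $z$-circle, for some integer $k$ depending on $A$. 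A short case analysis on $k$ shows that the image of this arc in $\TT$ always contains at least one of the two cyclic arcs between $z_i$ and $z_j$: the direct arc when $k = 0$, the complementary arc when the path wraps by one full loop, and all of $\TT$ when $\abs{k}$ is larger. Choosing $R_l$ to be the cyclic neighbour of $R_i$ on that covered side, the height $z_l$ lies in the image, so the slice of $A$ at $z = z_l$ is a $(p,q)$-curve on $\TT^2 \times \{z_l\}$, while $R_l$ is a non-parallel $(p_l,q_l)$-curve; these curves meet with geometric intersection number $\abs{p q_l - p_l q} \geq 1$, so $A$ meets $R_l$.

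Consequently every linear annulus between $R_i$ and $R_j$ meets another rod, establishing (b); together with (a), \cref{Thm:ClassifyingT3Rods} gives hyperbolicity. The main obstacle I anticipate is making the case analysis on the winding number $k$ fully rigorous — in particular, confirming that the $z$-projection of an arbitrary linear annulus, not merely the direct one, always contains a full cyclic side of $\{z_i, z_j\}$ that includes a non-parallel neighbour of $R_i$.
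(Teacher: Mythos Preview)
Your proof is correct and follows the same strategy as the paper, which also applies \cref{Thm:ClassifyingT3Rods}; the paper dispatches condition~(b) in a single sentence, whereas you spell out the geometry in detail. The case analysis you flag as a potential obstacle is not one: the $z$-projection of any linear annulus between $R_i$ and $R_j$ is a connected subset of the $z$-circle containing both $z_i$ and $z_j$, hence contains at least one of the two cyclic arcs between them, and since $R_i$ and $R_j$ are not adjacent each such arc contains a cyclic neighbour of $R_i$ that is non-parallel to it.
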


\begin{proof}
The direction vectors of rods $R_1, R_2, R_n$ are linearly independent, since $R_1$ and $R_2$ are not parallel, and $R_n$ is orthogonal to the plane spanned by the direction vectors of $R_1$ and $R_2$. Since no two neighbouring rods are parallel, each pair of disjoint parallel rods are not linearly isotopic in the complement of the other rods. Thus, the result follows from \cref{Thm:ClassifyingT3Rods}.
\end{proof}

\begin{definition} \label{Def:StdParent}
A \emph{standard rod complement} is the complement of a finite number of rods in~$\TT^3$, each with direction vector $(1,0,0)$, $(0,1,0)$ or $(0,0,1)$.

A \emph{standard parent manifold} of a rod complement $\TT^3 \setminus (R_1 \cup R_2 \cup \cdots \cup R_n)$ is a standard rod complement from which $\TT^3 \setminus (R_1 \cup R_2 \cup \cdots \cup R_n)$ can be obtained after a finite sequence of Dehn fillings.
\end{definition}

\begin{proposition}[Standard parent manifolds exist] \label{Prop:StdParentMfd}
Let $R_1, R_2, \ldots, R_n$ be disjoint rods in $\TT^3$ with $n \geq 3$. Suppose that $R_n$ has direction vector $(0, 0, 1)$ while each of the other rods $R_i$ has direction vector of the form $(p_i, q_i, 0)$. Suppose that $p_i/q_i$ has a continued fraction expansion with $m_i$ terms. Let $E$ denote the number of $(p_i,q_i,0)$-rods with even $m_i$ and let $O$ denote the number of $(p_i,q_i,0)$-rods with odd $m_i$. Then there exists a standard rod complement $M$ with $E$ $(1,0,0)$-core rods and $O$ $(0,1,0)$-core rods together with $2\sum_{i=1}^{n-1} m_i$ filling rods such that $\TT^3 \setminus (R_1 \cup R_2 \cup \cdots \cup R_n)$ can be obtained by applying $(p_i, q_i)$-nested annular Dehn filling to the core rods of $M$ for $i = 1, 2, \ldots, n-1$. 
\end{proposition}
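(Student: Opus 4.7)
The plan is to construct $M$ explicitly by reversing the nested annular Dehn fillings from \cref{Lem:ToGetpq0Rod} on each horizontal rod $R_i$, for $i = 1, 2, \ldots, n-1$.

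First, I would set up the geometry. Since the horizontal rods $R_1, R_2, \ldots, R_{n-1}$ are pairwise disjoint, any two with distinct direction vectors must lie in distinct horizontal planes (two non-parallel lines in the same plane would intersect). I would group the rods by horizontal plane and choose, around each such plane, a thin horizontal slab so that the slabs for distinct planes are pairwise disjoint. Each slab is then subdivided into thinner sub-slabs, one per parallel rod sharing the plane, producing disjoint sub-slabs $S_1, S_2, \ldots, S_{n-1}$ with $S_i$ a thin horizontal neighborhood of $R_i$. Each sub-slab $S_i$ is transverse to the vertical rod $R_n$ and meets it in a single vertical segment, whose $(x,y)$-projection is one point.

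Next, inside each sub-slab $S_i$, I would remove $R_i$ and install the standard configuration described in \cref{Lem:ToGetpq0Rod}: a core rod at the middle $z$-level of $S_i$, with direction vector $(1,0,0)$ if $m_i$ is even and $(0,1,0)$ if $m_i$ is odd, together with $m_i$ nested pairs of filling rods (alternating between $(1,0,0)$- and $(0,1,0)$-rods) stacked at distinct $z$-levels symmetrically above and below the core. Horizontal rods at different $z$-levels are automatically disjoint, and disjointness from $R_n$ can be ensured by translating each inserted rod within its $z$-level so that its $(x,y)$-projection avoids the point where $R_n$ meets $S_i$; since this is a codimension-one constraint on a one-dimensional space of translates, the permitted set is open and nonempty. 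Performing this insertion for every $i < n$ and keeping the vertical rod $R_n$ yields a standard rod complement $M$ with exactly $E$ cores of direction $(1,0,0)$, $O$ cores of direction $(0,1,0)$, $2\sum_{i=1}^{n-1} m_i$ filling rods, and one $(0,0,1)$-rod.

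The main task is to verify that applying the prescribed $(p_i, q_i)$-nested annular Dehn fillings to the cores of $M$ recovers $\TT^3 \setminus (R_1 \cup R_2 \cup \cdots \cup R_n)$. By \cref{Lem:ToGetpq0Rod}, each such filling converts the $i$-th core into a $(p_i, q_i, 0)$-rod lying inside $S_i$, while the filling rods are surgered away. The resulting rod is ambient isotopic to the original $R_i$ within $S_i \setminus R_n$, since both are $(p_i, q_i, 0)$-rods in the thin slab $S_i$ that avoid the same vertical puncture. The isotopy takes place entirely inside $S_i$ and is therefore disjoint from all other rods, so it extends to an ambient isotopy of $\TT^3$ preserving the remaining rod complement. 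Hence the filled manifold is homeomorphic to $\TT^3 \setminus (R_1 \cup R_2 \cup \cdots \cup R_n)$, as required. The subtlety I anticipate is handling parallel rods sharing a horizontal plane: the sub-slab partition must be chosen compatibly with the nested sandwich structure prescribed by \cref{Def:NestedAnnDehnFilling}, but this can be resolved by nesting the sub-slabs inside a slightly thicker slab around the shared plane.
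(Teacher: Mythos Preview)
Your proposal is correct and follows essentially the same approach as the paper: reverse the construction of \cref{Lem:ToGetpq0Rod} by replacing each horizontal rod $R_i$ with the appropriate sandwich of core and filling rods, then observe that the nested annular Dehn fillings act locally and do not disturb rods outside the relevant slabs. The paper's proof is considerably terser---it simply invokes \cref{Lem:ToGetpq0Rod} and the remark that annular Dehn filling leaves rods disjoint from the annuli untouched---whereas you spell out the slab decomposition, the disjointness from $R_n$, and the isotopy back to the original configuration; your extra care with parallel rods sharing a horizontal plane addresses a point the paper leaves implicit.
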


\begin{proof}
For $i = 1, 2, \ldots, n-1$, since $\gcd(p_i, q_i) = 1$, \cref{Lem:ToGetpq0Rod} and \cref{Def:NestedAnnDehnFilling} ensure that the $(p_i, q_i, 0)$-rod $R_i$ can be obtained by applying a $(p_i, q_i)$-nested annular Dehn filling to one of the $E+O$ core rods. The $2m_i$ filling rods sandwiching the core rod will be removed in the process of Dehn filling. Observe that a $(p_i, q_i)$-nested annular Dehn filling does not affect the isotopy classes of rods disjoint from the associated annuli. Hence, after applying $n-1$ nested annular Dehn fillings on the $E+O = n-1$ core rods, we obtain a $3$-manifold homeomorphic to $\TT^3 \setminus (R_1 \cup R_2 \cup \cdots \cup R_n)$.
\end{proof}

\cref{Prop:StdParentMfd} provides an explicit procedure to obtain a standard parent manifold of a rod complement with the particular form for which the result applies. The manifold $M$ in \cref{Prop:StdParentMfd} is a standard parent manifold of $\TT^3 \setminus (R_1 \cup R_2 \cup \cdots \cup R_n)$. Note that for each sandwich of a nested annular Dehn filling, the outermost pair of filling rods are $(1,0,0)$-rods. Between each pair of adjacent (possibly the same) sandwiches, the bottom filling rod of the top sandwich is linearly isotopic to the top filling rod of the bottom sandwich, so there is a natural choice of essential plane annulus between these two filling rods.  To obtain a hyperbolic standard parent manifold, we cut along any such essential plane annuli in $M$. Observe this merges two parallel rods into a single rod.

An example of a standard parent manifold with essential annuli and two core rods is shown in \cref{Fig:Standard}. Note that \cref{Fig:Parent_p7q7} below shows a hyperbolic standard parent manifold. For that example, black and pale green rods are both core rods, and the outermost red rods correspond to the top and bottom filling rods of the sandwiches. 

\begin{figure}
    \includegraphics[width=2in]{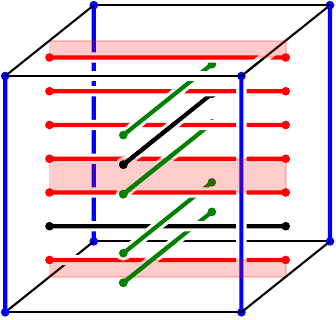}
    \caption{An example of a standard parent manifold with essential annuli.}
    \label{Fig:Standard}
\end{figure}


\begin{lemma} \label{Lem:OctahedralDecomposition}
Consider a standard parent manifold $M$ with exactly one $(0,0,1)$-rod and $m\geq 2$ additional rods, which alternate between $(1,0,0)$-rods and $(0,1,0)$-rods. Then $M$ is hyperbolic and can be decomposed into $m$ regular ideal octahedra. Thus, its volume is $\vol(M) = m \, \voct$, where $\voct \approx 3.66386$ is the volume of the regular ideal octahedron. 
\end{lemma}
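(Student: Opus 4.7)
The hyperbolicity of $M$ is immediate from \cref{Lem:Hyperbolic}: the alternation between $(1,0,0)$ and $(0,1,0)$ directions guarantees that no two horizontal rods adjacent in the $z$-ordering are parallel, and the $(0,0,1)$-rod is orthogonal to the common plane of the horizontal rods. So the work is in producing the decomposition.

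The plan is to cut $M$ along $m$ horizontal tori. Let $z_1 < z_2 < \cdots < z_m$ be the heights of the horizontal rods in the unit cube, and choose intermediate heights $z_i^\ast \in (z_i, z_{i+1})$ for $i = 1, \ldots, m-1$ together with one more height $z_m^\ast$ between $z_m$ and $z_1 + 1$ under the circular $z$-coordinate. Cut $M$ along the tori $T_i = \TT^2 \times \{z_i^\ast\}$. These tori are disjoint from all rods and, being essential in $\TT^3$ and meeting the vertical rod once, remain essential in $M$. The result is $m$ topological slabs $S_1, \ldots, S_m$, each homeomorphic to $\TT^2 \times [0,1]$ with one horizontal closed curve at the middle level (either a $(1,0)$- or $(0,1)$-curve, depending on the slab) and one vertical arc (a segment of the $(0,0,1)$-rod) removed, with its endpoints on the top and bottom boundary tori.

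The geometric heart of the proof is to show that each slab $S_j$ is an ideal regular octahedron. I would make this explicit by placing six ideal vertices in the upper half-space model of $\mathbb{H}^3$ at the symmetric configuration $\{0, \infty, 1, -1, i, -i\}$, so that the resulting ideal octahedron is regular. The key observation is that the slab enjoys two obvious isometric involutions: a rotation of order two exchanging the top and bottom boundary tori while preserving both the horizontal closed curve and the vertical arc setwise, and an orthogonal involution coming from the reflection symmetry across the plane containing the horizontal curve and the vertical arc. These symmetries, combined with the order-two rotational symmetry along the horizontal curve, act transitively on the appropriate combinatorial pieces of the slab, forcing the octahedral structure to be regular once one checks that the face-pairing pattern on the slab matches that of the standard regular ideal octahedron. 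I would next verify that the horizontal torus cross-sections $T_i$ inherit matching Euclidean square structures from the octahedra on either side, so that the gluings between consecutive slabs are isometries, yielding a complete hyperbolic structure on $M$.

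The main obstacle is precisely the identification of the slab with the regular ideal octahedron: this requires constructing the ideal polyhedral structure on $\TT^2 \times [0,1] \setminus (C \cup V)$ explicitly, matching the six ideal vertices (two coming from the four corners of each boundary torus, glued in pairs, plus one each for the horizontal curve and for the two endpoints of the vertical arc), and checking that the face identifications are realized by hyperbolic isometries. Once this is done, the $m$ slabs glue into a hyperbolic $M$ realized by $m$ regular ideal octahedra, and Mostow--Prasad rigidity gives $\vol(M) = m \, \voct$ as claimed.
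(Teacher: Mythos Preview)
Your overall strategy---cut into $m$ pieces, identify each with a regular ideal octahedron, check the gluing is geometric---is the same as the paper's, but your cutting scheme is different and this creates real difficulties that you have not resolved.

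The paper cuts along an $xz$-plane torus, a $yz$-plane torus, and the $m$ horizontal $xy$-plane tori \emph{containing} the horizontal rods. This yields $m$ genuine $3$-balls (cubes) with six arcs removed from the boundary, which are then visibly ideal octahedra after shrinking those arcs. By contrast, you cut only along $m$ horizontal tori placed \emph{between} the rods. Your slabs $S_j \cong \TT^2 \times [0,1] \setminus (C \cup V)$ are not balls: they still carry the $\pi_1$ of the torus factor, and each has the horizontal rod in its interior rather than on its boundary. So a slab is at best a \emph{quotient} of an octahedron under nontrivial face pairings, not an octahedron itself. You acknowledge this identification as ``the main obstacle,'' but it is precisely the content of the lemma; without it, nothing has been proved. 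Your proposed vertex count (``two coming from the four corners of each boundary torus'') already presupposes that the vertical $xz$- and $yz$-cuts have been made, which they have not in your scheme.

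There is also a conceptual error in the regularity step. Symmetries of the slab cannot \emph{force} the octahedron to be regular: many hyperbolic ideal octahedra share the same combinatorial face-pairing pattern, and the involutions you describe act on all of them. Regularity is not a consequence but a \emph{choice}: one declares each octahedron to be regular (dihedral angles $\pi/2$), and then checks that the resulting gluing satisfies the edge and cusp conditions for a complete hyperbolic structure. The paper does exactly this---four right dihedral angles meet around each edge, and each cusp is tiled by Euclidean squares---and then invokes rigidity. Your proposal would be repaired by adopting the paper's three-directional cutting to obtain balls directly, and by replacing the symmetry argument with the straightforward angle-sum verification.
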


\begin{proof}
The fact that $M$ is hyperbolic follows from \cref{Thm:ClassifyingT3Rods}. Alternatively, one can construct the hyperbolic structure directly as follows. Cut $M$ along an $xz$-plane torus, a $yz$-plane torus, and all $xy$-plane tori that contain $(1,0,0)$-rods or $(0,1,0)$-rods. We obtain $m$ three-dimensional balls, each with six arcs removed from the boundary. By shrinking these arcs, one obtains $m$ ideal octahedra; see \cref{Fig:OctahedraMidpoints}. 

We can assign a complete hyperbolic metric on $M$ by setting each ideal octahedron to be regular. Such a polyhedron has dihedral angles equal to $\pi/2$. The gluing of the octahedra identifies four such dihedral angles around each edge and tiles each cusp by Euclidean squares, so one obtains a complete hyperbolic structure; see~\cite[Theorem~4.10]{Purcell:HyperbolicKnotTheory}. The volume of $M$ is then $m \, \voct$, the sum of the volumes of the octahedra.
\end{proof}

\begin{lemma} \label{Lem:CuspShapeAndSize}
Let $M$ be a hyperbolic standard parent manifold. The fundamental region of the torus cusp boundary corresponding to each filling rod of $M$ is a Euclidean rectangle formed by gluing two squares corresponding to cusp neighbourhoods of ideal vertices of octahedra. The meridian forms one of the sides of the rectangle, running along one edge of each square. The longitude forms the other side of the rectangle, running along an edge of one of the squares. Finally, there exists a choice of horoball neighbourhoods with disjoint interiors for the rod complement such that the meridian has length 2 and the longitude has length 1. 
\end{lemma}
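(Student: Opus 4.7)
The plan is to analyze the cusp cross-section of each filling rod directly from the octahedral decomposition of $M$ given by \cref{Lem:OctahedralDecomposition}. Each filling rod $R$ lies on the shared horizontal face between the two adjacent octahedra above and below it in the $z$-direction, and the arc of $R$ on this face is shrunk to a single ideal vertex---a \emph{pole}---in each of these two octahedra. Hence the cusp torus of $R$ is assembled from exactly two vertex links: one pole square from each adjacent octahedron.

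First, I will describe each pole's vertex link in a regular ideal octahedron. Since all dihedral angles are $\pi/2$ and four faces meet at every ideal vertex, the link of each pole is a Euclidean square. I will use the standard horoball packing in which horoballs at vertices joined by an edge are tangent along that edge; this choice makes every vertex link a unit square, and the horoballs have pairwise disjoint interiors within the octahedron. Since distinct octahedra in the decomposition have disjoint interiors in $M$, combining these horoballs yields horoball neighborhoods in $M$ with pairwise disjoint interiors.

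Second, I will track the gluings that assemble the two pole squares into the cusp torus of $R$. Of the four triangles meeting at each pole, two are the halves of the horizontal face at the height of $R$ cut by the arc of $R$; these are glued across the two adjacent octahedra, identifying a pair of opposite sides of one pole square with a pair of opposite sides of the other pole square. The other two triangles at each pole are vertical faces of the corresponding octahedron, which are identified with one another via the $\TT^3$-identification transverse to $R$ (the $y$-identification if $R$ is a $(1,0,0)$-rod, the $x$-identification if $R$ is a $(0,1,0)$-rod); this identifies the remaining pair of opposite sides of each pole square with each other. Placing the two unit pole squares side by side then realizes the cusp torus as a Euclidean $2 \times 1$ rectangle with opposite sides identified.

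Finally, I will identify the meridian and longitude geometrically. A meridian of $R$ is a small loop linking $R$, so it must cross the common horizontal plane transversely and hence traverses both pole squares; this gives the length-$2$ side of the rectangle, running along one edge of each square. A longitude of $R$ is parallel to $R$, stays within a single pole square, and closes up via the $\TT^3$-identification along $R$; this gives the length-$1$ side of the rectangle, running along one edge of one square. The main obstacle I anticipate is the bookkeeping needed to match each vertical face of an octahedron with the correct pole---this depends on whether $R$ is a $(1,0,0)$- or $(0,1,0)$-rod---but once that is tracked carefully, the gluings cohere as claimed and the stated lengths follow.
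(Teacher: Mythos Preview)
Your approach is essentially the paper's: locate the two octahedra adjacent to the rod, read off the cusp torus as the two vertex-link squares, track the meridian and longitude through the face gluings, and use the tangent-along-edges horoball packing (this is exactly the paper's midpoint construction) to obtain unit squares. One bookkeeping slip to fix: for a $(1,0,0)$-rod $R$, the two vertical triangles of the octahedron meeting its pole are the $x=0$ and $x=1$ faces, since those are where the endpoints of the arc of $R$ land; they are therefore glued by the $x$-identification, i.e.\ the identification \emph{along} $R$, not transverse to it. This is in fact consistent with your later, correct, statement that the longitude closes up via the identification along $R$, so once you correct that earlier sentence the argument goes through as written.
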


\begin{proof}
Consider how the octahedra in the proof of \cref{Lem:OctahedralDecomposition} fit together. Since the rod complement can be decomposed into ideal octahedra, the cusps corresponding to the filling rods are tiled by Euclidean squares that are cusp neighbourhoods of the ideal vertices of the octahedra.

Note that each horizontal rod $R$ meets exactly two octahedra: one above the $xy$-plane containing $R$, which we cut along to obtain the decomposition, and one below. The meridian~$\mu$ runs once through each and can be isotoped to run through the $xz$- or $yz$-plane as in the left of \cref{Fig:OctahedraMidpoints}. Hence, it lies on faces of the two octahedra. Thus, the meridian forms a closed curve running along one edge in each of the two squares corresponding to the two octahedra.

\begin{figure}
\includegraphics[scale=0.75]{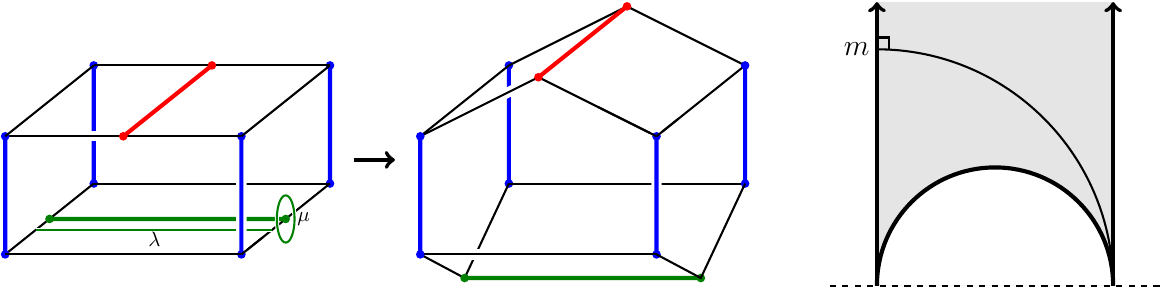}
\caption{Left: The complement of rods in a standard parent manifold can be decomposed into ideal octahedra. Shown are the meridian $\mu$ and the longitude $\lambda$ for the green rod on the bottom. Right: The midpoint $m$ of an ideal edge in a hyperbolic ideal triangle.}
\label{Fig:OctahedraMidpoints}
\end{figure}

The longitude may be isotoped to run through a single octahedron, say the one above the $xy$-plane containing $R$, as in the left of \cref{Fig:OctahedraMidpoints}. Thus, it forms one side of a cusp square. Finally, observe that the square is glued to itself by the identity, with one side glued to the opposite side. 
The cusp is a Euclidean rectangle, comprised of two squares, with the meridian running along the long edge of the rectangle and the longitude running along the short edge.

It remains to argue that the lengths of the meridian and the longitude are 2 and 1, respectively. To do so, we show that we can choose horoballs about the cusps of $M$ with disjoint interiors such that when we intersect with the ideal octahedra, the boundary of the intersection is a collection of squares, each with side length 1. The horoball expansion we use is the same as that appearing in \cite[Lemma~7.22]{Purcell:HyperbolicKnotTheory} or \cite[Lemma~3.7]{FuterPurcell:LinksNoExceptionalSurgeries}. 
That is, each edge $e$ of the octahedron borders two triangular faces. The midpoint of the edge $e$ with respect to one of the triangles is the unique point on the edge $e$ that lies on a perpendicular hyperbolic geodesic running from the opposite vertex to $e$; see the right of \cref{Fig:OctahedraMidpoints}. Since our ideal octahedron is regular, the midpoints obtained from either adjacent triangle agree. When the vertices of the ideal triangle are placed at 0, 1 and $\infty$, the midpoint has height 1. If we place a regular ideal octahedron containing a side with vertices at 0, 1, and $\infty$, the midpoints of each of the edges meeting infinity also have height 1. This remains true after applying a M\"{o}bius transformation taking any vertex to infinity. Thus, we may expand horoballs about each ideal vertex to the height of the midpoints of the four edges meeting that vertex. This gives a collection of horoballs that are tangent exactly at the midpoints of edges, with disjoint interiors. The boundary of each horoball meets the octahedron in a square of side length $1$. Finally, since the octahedra are glued in such a way that cusp squares glue to cusp squares with the same side lengths, this gluing must preserve this choice of horoballs. Hence, these define horoball neighbourhoods with disjoint interiors and lengths as claimed. 
\end{proof}

\begin{lemma} \label{Lem:SlopeLength}
Let $M$ be a hyperbolic standard parent manifold, with slope $1/n$ on one of the horizontal rods. Then in the horoball neighbourhood described in \cref{Lem:CuspShapeAndSize}, the length of the slope is $\sqrt{n^2+4}$. 
\end{lemma}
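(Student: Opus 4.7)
The plan is to read off the slope length directly from the flat cusp geometry established in \cref{Lem:CuspShapeAndSize}. By that lemma, for the horoball neighbourhood in question, the cusp torus has a fundamental domain that is a Euclidean rectangle formed by gluing two unit squares along an edge, with the meridian $\mu$ running once along the long side and the longitude $\lambda$ running once along the short side. Consequently $\abs{\mu}=2$, $\abs{\lambda}=1$, and, because the sides of the rectangle are orthogonal, $\mu$ and $\lambda$ span orthogonal directions in the flat metric on the cusp.

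Next I would unpack the convention for the $1/n$ slope. Under the standard framing used in the definition of $(1/n)$-annular Dehn filling from \cref{Sec:Nested}, the slope $1/n$ is represented by the simple closed curve $\mu + n\lambda$ on the boundary torus of the rod neighbourhood, with $\lambda$ parallel to the boundary of the annulus cobounded by the parallel rods. This is precisely the longitude identified in \cref{Lem:CuspShapeAndSize}, so the two conventions agree.

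Finally, since $\mu\perp\lambda$ in the cusp, the Euclidean length of $\mu + n\lambda$ is computed by the Pythagorean theorem:
\[
\text{length}(\mu + n\lambda) = \sqrt{\abs{\mu}^2 + n^2\abs{\lambda}^2} = \sqrt{4 + n^2}.
\]
This gives the claimed value $\sqrt{n^2+4}$.

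I do not anticipate a serious obstacle, as all the geometric content has already been done in \cref{Lem:CuspShapeAndSize}; the only care needed is to confirm that the $1/n$ in the slope notation refers to $\mu + n\lambda$ in the framing used there, which is indeed the framing fixed by the parallel-rod annulus.
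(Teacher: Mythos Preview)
Your proposal is correct and follows essentially the same approach as the paper: both read off $\abs{\mu}=2$, $\abs{\lambda}=1$, and $\mu\perp\lambda$ from \cref{Lem:CuspShapeAndSize}, identify the $1/n$ slope with $\mu+n\lambda$, and apply the Pythagorean theorem. Your extra sentence checking that the annular-Dehn-filling framing agrees with the longitude of \cref{Lem:CuspShapeAndSize} is a nice bit of care the paper leaves implicit.
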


\begin{proof}
The slope $1/n$ runs once along a meridian and $n$ times along the longitude. In the universal cover of the cusp torus, it can be lifted to an arc with one endpoint at $(0,0)$ and the other at $(2,n)$. The meridian and longitude are orthogonal, with the meridian of length $2$ and the longitude of length $1$. Hence, length of the slope is $\sqrt{n^2+2^2}$. 
\end{proof}

We are now ready to prove the coarse volume bound discussed in the introduction.

\begin{theorem} \label{Thm:MainOrthogonalRods}
\Paste{MainOrthogonalRods}
\end{theorem}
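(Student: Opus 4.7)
The plan is to deduce both bounds by comparing $\vol(M)$ with the volume of a hyperbolic standard parent manifold $\widetilde{M}_0$ whose volume is computed explicitly via Lemma~\ref{Lem:OctahedralDecomposition}. First I would invoke Proposition~\ref{Prop:StdParentMfd} to construct a standard parent manifold $M_0$ containing $R_n$, the $n-1$ core rods, and $2\sum_{i=1}^{n-1} m_i$ filling rods. Because every sandwich has outermost rods of type $(1,0,0)$, each sandwich-to-sandwich boundary (in the cyclic $z$-ordering on $\TT^3$) places two parallel $(1,0,0)$-rods adjacent to each other, cobounding an essential annulus that obstructs the alternating hypothesis of Lemma~\ref{Lem:OctahedralDecomposition}. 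I would cut $M_0$ along each of these $n-1$ essential annuli; each cut absorbs a pair of adjacent parallel rod cusps into a single cusp of the hyperbolic piece $\widetilde{M}_0$. Thus $\widetilde{M}_0$ is a hyperbolic standard parent manifold with exactly $2\sum m_i$ horizontal rods that alternate in direction together with $R_n$, and Lemma~\ref{Lem:OctahedralDecomposition} yields $\vol(\widetilde{M}_0) = 2\voct \sum_{i=1}^{n-1} m_i$.

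Hyperbolicity of $M$ is immediate from Lemma~\ref{Lem:Hyperbolic} using the non-parallel-neighbors hypothesis. Because $M$ is obtained from $\widetilde{M}_0$ by Dehn filling all cusps that do not correspond to $R_1, \ldots, R_n$, Thurston's strict volume inequality under Dehn filling gives $\vol(M) < \vol(\widetilde{M}_0) = 2\voct \sum m_i$, proving the upper bound.

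For the lower bound I would apply the Dehn-filling volume estimate of Futer--Kalfagianni--Purcell~\cite{Futer-Kalfagianni-Purcell:DehnFillingVolumeJonesPolynomial}: provided every filled cusp has slope length greater than $2\pi$ in the maximal horoball system on $\widetilde{M}_0$, one has $\vol(M) \geq \bigl(1 - (2\pi/\ell_{\min})^2\bigr)^{3/2} \vol(\widetilde{M}_0)$, where $\ell_{\min}$ is the minimum slope length. Using the horoball packing of Lemma~\ref{Lem:CuspShapeAndSize} and the slope formula of Lemma~\ref{Lem:SlopeLength}, each interior filling pair $(R_{ij}^\pm)$ with $j \geq 2$ contributes a slope $1/c_{ij}$ of length $\sqrt{c_{ij}^2 + 4}$. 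At each of the $n-1$ merged cusps, Lemma~\ref{Lem:AnnularDFIndependentA} shows that the combined filling slope is $1/(c_{i1} - c_{(i-1)1})$, with the cyclic convention $c_{01} = c_{(n-1)1}$, and so has length $\sqrt{(c_{i1} - c_{(i-1)1})^2 + 4}$. Hence $\ell_{\min} = \sqrt{C^2 + 4}$, and the hypothesis $C \geq 6$ yields $\ell_{\min} \geq \sqrt{40} > 2\pi$, validating the estimate and producing the claimed lower bound. Asymptotic sharpness then follows because $\bigl(1 - 4\pi^2/(C^2+4)\bigr)^{3/2} \to 1$ as $C \to \infty$.

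The principal obstacle is correctly identifying the Dehn-filling slopes on the merged cusps of $\widetilde{M}_0$ at the sandwich boundaries. This requires tracking the combined effect of two successive annular Dehn fillings on a shared cusp via Lemma~\ref{Lem:AnnularDFIndependentA} and handling the cyclic indexing where the last and first sandwiches meet in $\TT^3$, so that the effective slope on a merged cusp is $1/(c_{i1} - c_{(i-1)1})$ rather than $1/c_{i1}$ or $1/c_{(i-1)1}$ separately.
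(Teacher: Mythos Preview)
Your proposal is correct and follows essentially the same route as the paper: build the standard parent manifold from Proposition~\ref{Prop:StdParentMfd}, merge the adjacent outermost $(1,0,0)$ filling rods across the $n-1$ sandwich boundaries to obtain a hyperbolic standard parent manifold with $2\sum m_i$ alternating horizontal rods and one vertical rod, apply Lemma~\ref{Lem:OctahedralDecomposition} for its volume, then use Thurston's monotonicity for the upper bound and the Futer--Kalfagianni--Purcell estimate with the slope lengths from Lemmas~\ref{Lem:CuspShapeAndSize} and~\ref{Lem:SlopeLength} for the lower bound, identifying the slopes on the merged outermost cusps as $1/(c_{i1}-c_{(i-1)1})$ via Lemma~\ref{Lem:AnnularDFIndependentA}. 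One small wording point: the horoball neighbourhood supplied by Lemma~\ref{Lem:CuspShapeAndSize} is a specific embedded one, not asserted to be maximal, so your $\sqrt{C^2+4}$ is a \emph{lower bound} for the slope length in the maximal neighbourhood rather than an equality; this is exactly what the Futer--Kalfagianni--Purcell theorem needs, so the argument is unaffected.
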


\begin{proof}
By \cref{Lem:Hyperbolic}, the manifold $M$ must be hyperbolic.

We construct standard parent manifolds with ideal octahedral decompositions. By \cref{Prop:StdParentMfd}, there exists a standard rod complement $N$ with $n-1$ core rods and $\sum_{i=1}^{n-1} 2m_i$ filling rods such that $M$ can be obtained by applying a $(p_i,q_i)$-nested annular Dehn filling to each of the core rods of $N$. Observe that the outermost pair of filling rods for each nested annular Dehn filling are $(1,0,0)$-rods. Each of the two outermost filling rods for each nested annular Dehn filling will be linearly isotopic to an outermost filling rod for another nested annular Dehn filling. By cutting along the essential annuli arising from all of these linear isotopies, we obtain a standard parent manifold $N_\textup{p}$ with exactly one $(0,0,1)$-rod, namely $R_n$, and alternating $(1,0,0)$-rods and $(0,1,0)$-rods.

By \cref{Lem:OctahedralDecomposition}, $N_\textup{p}$ has a decomposition into $\sum_{i=1}^{n-1} 2m_i$ regular ideal octahedra and it admits a complete hyperbolic structure.

We obtain $M=\TT^3 \setminus (R_1 \cup R_2 \cup \cdots \cup R_n)$ by Dehn filling the standard parent manifold $N_\textup{p}$. Since Dehn filling decreases volume~\cite{Thurston:Geom&TopOf3Mfd}, we obtain the bound
\[
\vol(M) < \vol(N_\textup{p}) = \voct\,\sum_{i=1}^{n-1} 2m_i.
\]
Furthermore, this bound is asymptotically sharp. Taking larger and larger values for the coefficients $c_{ij}$ of the continued fraction expansion while fixing the lengths $m_i$ will produce Dehn fillings of the same parent manifold whose volumes converge to that of the parent manifold.

For the lower bound, we consider the slopes of the Dehn filling. These are of the form $1/c_{ij}$ for filling components with $2 \leq j \leq m_i$. For the outermost filling rods, the coefficient of the Dehn filling combines the $1/c_{i1}$ from one side with $-1/c_{(i-1)1}$ from the other side, as in \cref{Lem:AnnularDFIndependentA}. Thus, the slope is $1/(c_{i1}-c_{(i-1)1})$.

By \cref{Lem:CuspShapeAndSize}, for any integer $\ell$, the length of the slope $1/\ell$ on a filling rod is $\sqrt{\ell^2 + 4}$. So under the hypotheses required for the lower bound, the minimum length slope will be at least $\sqrt{6^2+4} > 2\pi$. We may now apply a theorem of Futer, Kalfagianni and Purcell, which states that if the minimum slope length is larger than $2\pi$, then the volume change under Dehn filling is a multiple of the volume of the unfilled manifold~\cite[Theorem~1.1]{Futer-Kalfagianni-Purcell:DehnFillingVolumeJonesPolynomial}. In our case, this leads to
\[
\vol(M) \geq \left( 1 - \frac{4\pi^2}{C^2+4} \right)^{3/2} 2 \, \voct \sum_{i=1}^{n-1} m_i. \qedhere
\]
\end{proof}

\begin{remark}
The upper bound of \cref{Thm:MainOrthogonalRods} motivates one to seek an efficient expression for such rod complements, with the complexity measured by $\sum_{i=1}^{n-1} m_i$, the sum of the lengths of the continued fractions. One may simultaneously switch each $(p_i,q_i,0)$-rod to a $(q_i,p_i,0)$-rod, which may change $\sum_{i=1}^{n-1} m_i$. Recall that we allow negative terms in our continued fractions, as per the discussion in \cref{subsec:continued}. Typically, one obtains shorter continued fractions this way than if one restricts to using positive integers as terms.
\end{remark}

\begin{corollary} \label{Cor:BadUpperBound}
\Paste{BadUpperBound}
\end{corollary}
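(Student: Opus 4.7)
The plan is to exhibit an explicit sequence of rod complements $\{M_k\}_{k \geq 2}$ whose volumes stay uniformly bounded by virtue of \cref{Thm:MainOrthogonalRods}, yet whose \cref{Thm:VolBoundsDet} upper bounds diverge. The key insight driving the construction is that the former bound depends only on the lengths of the continued fraction expansions of the slopes $p_i/q_i$, whereas the latter depends on the wedge products $|p_i q_j - p_j q_i|$ themselves; choosing rod parameters whose continued fraction expansions have fixed length but whose lone coefficient grows makes the wedge products blow up while keeping $\sum m_i$ fixed.

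Concretely, for each integer $k \geq 2$ I would take $M_k$ to be the complement in $\TT^3$ of three rods $R_1, R_2, R_3$ with direction vectors $(1, k, 0)$, $(k, 1, 0)$ and $(0, 0, 1)$, with $R_1$ positioned above $R_2$ in the unit cube. Since $(1, k) \neq (k, 1)$ for $k \geq 2$, \cref{Lem:Hyperbolic} immediately gives that $M_k$ is hyperbolic. The relevant continued fraction expansions are $1/k = [0; k]$ of length $m_1 = 2$ and $k/1 = [k]$ of length $m_2 = 1$, so \cref{Thm:MainOrthogonalRods} yields the uniform bound
\[
\vol(M_k) \leq 2\,\voct(m_1 + m_2) = 6\,\voct.
\]

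Next, evaluating the \cref{Thm:VolBoundsDet} upper bound for the original description (taking $R_3$ to already be the $(0,0,1)$-rod, so that $k_{\mathrm{Thm}} = 2$), the pairwise intersection sum is $|1 \cdot 1 - k \cdot k| = k^2 - 1$ while both gcd terms vanish, giving the bound $8\,\vtet(k^2 - 1)$, which diverges as $k \to \infty$.

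The main obstacle, given the remark after \cref{Thm:VolBoundsDet} about naturally minimising over linear homeomorphisms, is to confirm that no choice of linear homeomorphism of $\TT^3$ keeps the \cref{Thm:VolBoundsDet} bound bounded. The key observation is that once a vertical rod is fixed, the residual $\GL(2, \ZZ)$ action on the $(p_i, q_i)$-projections preserves both the wedge products and the gcds of the projected direction vectors, so the corresponding bound is an invariant of the choice of vertical rod. It then suffices to examine the two remaining cases: sending $R_1$ or $R_2$ to $(0, 0, 1)$ via matrices constructed as in \cref{Lem:gcdAndDet}. A direct computation shows that in each case the other two rods acquire projections whose wedge is still of order $k^2$ but now with one projection becoming non-primitive, contributing an additional gcd term of order $k^2$; the resulting upper bound is therefore even larger than $8\,\vtet(k^2-1)$. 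Hence the minimum \cref{Thm:VolBoundsDet} bound over all linear homeomorphisms still tends to infinity as $k \to \infty$, establishing the corollary.
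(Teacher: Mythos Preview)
Your argument is correct and follows essentially the same strategy as the paper: exhibit a family of rod complements whose slopes have continued fraction expansions of fixed length but growing coefficients, so that \cref{Thm:MainOrthogonalRods} gives a uniform volume bound while the wedge-product sum in \cref{Thm:VolBoundsDet} diverges. The paper uses the simpler family $R_1^{(n)}=(n,1,0)$, $R_2=(0,1,0)$, $R_3=(0,0,1)$, obtaining $m_1=m_2=1$ and hence $\vol\leq 4\voct$, with wedge product $|n\cdot 1-0\cdot 1|=n$; it does not address minimisation over linear homeomorphisms at all. Your choice of two nontrivial horizontal rods costs you a slightly weaker bound ($6\voct$), but your additional verification that the \cref{Thm:VolBoundsDet} bound still diverges after minimising over the choice of vertical rod---via the $\GL(2,\ZZ)$-invariance of wedges and gcds together with the explicit check of the two remaining cases---is a genuine strengthening that the paper omits, and it cleanly addresses the concern raised in the remark following \cref{Thm:VolBoundsDet}.
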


\begin{proof}
For $n$ a positive integer, let $R_1^{(n)}$ be an $(n,1,0)$-rod, let $R_2$ be a $(0,1,0)$-rod, and let $R_3$ be a $(0,0,1)$-rod. These rods satisfy the hypotheses of the first part of \cref{Thm:MainOrthogonalRods}. Note that the continued fraction associated to the rod $R_1^{(n)}$ is $n/1 = [n]$. Thus, in the notation of \cref{Thm:MainOrthogonalRods}, we have $m_1 = 1$ for any choice of $n$ and we also have $m_2 = 1$. So the upper bound of \cref{Thm:MainOrthogonalRods} implies that 
\[
\vol \left( \TT^3 \setminus (R_1^{(n)} \cup R_2 \cup R_3) \right) \leq 4 \, \voct.
\]
On the other hand, we have $(p_1, q_1) = (n, 1)$ and $(p_2, q_2) = (0, 1)$, so 
$|p_1q_2 - p_2q_1| = n$, which is unbounded as $n$ grows to infinity.
\end{proof}

\begin{corollary} \label{Cor:3CuspedInfVol}
\Paste{3CuspedInfVol}
\end{corollary}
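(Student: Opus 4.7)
The plan is to exhibit a single explicit family of hyperbolic rod complements with exactly three rods to which the lower bound of \cref{Thm:MainOrthogonalRods} applies nontrivially, and for which the sum of continued fraction lengths tends to infinity. For each integer $k \geq 1$, let $p_k/q_k \in \mathbb{Q}$ be the rational number whose continued fraction expansion is $[6; 6, \ldots, 6]$ with exactly $k$ sixes. Define $R_1^{(k)}$ to be a $(p_k, q_k, 0)$-rod, $R_2$ a $(0, 1, 0)$-rod (using the expansion $[0]$ of length one), and $R_3$ a $(0, 0, 1)$-rod. Place $R_1^{(k)}$ above $R_2$ in the unit cube, so that the positioning hypothesis of \cref{Thm:MainOrthogonalRods} is satisfied.

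Next I would check the hypotheses of \cref{Thm:MainOrthogonalRods}. The rod $R_3$ has the required vertical direction, while $R_1^{(k)}$ and $R_2$ are horizontal with distinct projected directions, so the non-parallel neighbours condition is automatic. In the notation of the theorem, $m_1 = k$ and $m_2 = 1$, the interior coefficients $c_{1j}$ with $j \geq 2$ are all equal to $6$, and
\[
|c_{11} - c_{21}| = |c_{21} - c_{11}| = |6 - 0| = 6.
\]
There are no coefficients $c_{2j}$ with $j \geq 2$, so the minimum $C$ equals $6$, and the lower bound of \cref{Thm:MainOrthogonalRods} applies.

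Applying this lower bound gives
\[
\vol\!\left(\TT^3 \setminus (R_1^{(k)} \cup R_2 \cup R_3)\right) \;\geq\; \left(1 - \frac{4\pi^2}{40}\right)^{\!3/2} 2 \, \voct \, (k + 1),
\]
and since the factor in front is a positive constant independent of $k$, the right-hand side tends to infinity as $k \to \infty$. This will produce the desired sequence of three-rod hyperbolic complements with unbounded volume.

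I do not expect a serious obstacle, since the heavy lifting is already done by \cref{Thm:MainOrthogonalRods}. The main conceptual observation is that its lower bound depends on the continued fraction lengths rather than on the number of rods, so keeping $n = 3$ while making the continued fractions arbitrarily long forces volumes to grow without bound. The only minor check is that the expansion $[6;6,\ldots,6]$ of length $k$ is a legitimate continued fraction in the sense of \cref{subsec:continued}, which is immediate because every coefficient is $6 \neq 1$.
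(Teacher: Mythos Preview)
Your proposal is correct and follows essentially the same approach as the paper: both construct a three-rod family $R_1^{(k)}, R_2 = (0,1,0), R_3 = (0,0,1)$ and apply the lower bound of \cref{Thm:MainOrthogonalRods} with $m_1 = k$, $m_2 = 1$. The only cosmetic difference is that the paper takes $p_k/q_k = [k;k,\ldots,k]$ (so $C = k$ grows), whereas you fix all coefficients at $6$ (so $C = 6$ throughout); either choice works, and yours gives a slightly cleaner constant out front.
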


\begin{proof} 
Define the sequence of rational slopes
\[
p_k/q_k = [\underbrace{k; k, k, \ldots, k}_{k \text{ terms}}].
\] 
for $k \geq 6$. For example, we have
\begin{align*}
p_6/q_6 &= [6; 6, 6, 6, 6, 6] = 53353/8658, \\ 
p_7/q_7 &= [7; 7, 7, 7, 7, 7, 7] = 927843/129949, \\ 
p_8/q_8 &= [8; 8, 8, 8, 8, 8, 8, 8] = 18674305/2298912. 
\end{align*}

Let $R_1^{(k)}$ be a $(p_k, q_k, 0)$-rod, let $R_2$ be a $(0,1,0)$-rod, and let $R_3$ be a $(0,0,1)$-rod. Let $M_k = \TT^3 \setminus (R_1^{(k)} \cup R_2 \cup R_3)$ be the associated rod complement. Using the notation of \cref{Thm:MainOrthogonalRods}, we have $m_1 = k$, $m_2 = 1$, and $C = k \geq 6$. \Cref{Fig:Parent_p7q7} shows the standard parent manifold of $\TT^3 \setminus(R_1^{(7)}\cup R_2\cup R_3)$. So \cref{Thm:MainOrthogonalRods} implies that
\[
\vol(M_k) \geq \left( 1 - \frac{4\pi^2}{k^2+4} \right)^{3/2} 2 \, \voct \, (k+1) > \left( 1 - \frac{4\pi^2}{6^2+4} \right)^{3/2} 2 \, \voct \, k > 0.01091 k.
\]
Since the right side grows to infinity with $k$, the volume of $M_k$ also grows to infinity. 
\end{proof}

\begin{figure}[ht!]
    \includegraphics[width=2in]{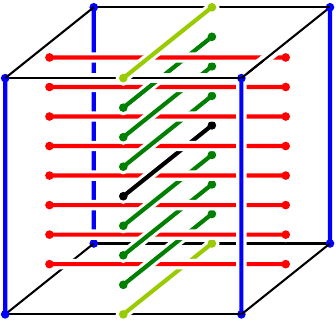}
\caption{The standard parent manifold of $\TT^3 \setminus(R_1^{(7)}\cup R_2\cup R_3)$. The black rod is the core rod with direction vector $(0,1,0)$; the pale green $(0,1,0)$-rod that lies on the boundary (top and bottom) of the unit cube is $R_2$; the blue $(0,0,1)$-rod is $R_3$.}
\label{Fig:Parent_p7q7} 
\end{figure}

\section{Further discussion} \label{Sec:Discussion} 

Our results on the volumes of rod complements suggest various natural questions worthy of further exploration, such as the following.

\begin{question} \label{Question:Coarse}
Do there exist two-sided coarse volume bounds for all rod complements in terms of the rod parameters?
\end{question}

By \cref{Cor:BadUpperBound,Cor:3CuspedInfVol}, such bounds cannot depend only on the number of rods nor on the number of intersections of the rods in a particular projection. It would be natural to wonder whether two rod complements with the same rod parameters have volumes with bounded ratio.

\begin{question} \label{Question:Mutation}
Does hyperbolic volume distinguish rod complements up to homeomorphism?
\end{question}

It would be surprising if any two rod complements with the same hyperbolic volume were necessarily homeomorphic. It is well-known that hyperbolic volume does not distinguish hyperbolic 3-manifolds in general. In particular, mutation of cusped hyperbolic 3-manifolds can change its homeomorphism class, but necessarily preserves the hyperbolicity and volume~\cite{Ruberman:Mutation}. An example of mutation involves cutting along an essential embedded 4-punctured sphere bounding a tangle in a ball, rotating the ball via a certain involution, and then regluing. Mutation can also be performed with respect to surfaces of other topologies that possess a suitable involution. It is not immediately obvious whether rod complements contain such embedded essential surfaces along which mutation can be performed.

\begin{question}
Does there exist a rod complement with a non-trivial mutation?
\end{question}

\bibliographystyle{amsplain}
\bibliography{rod-complement-volume-bounds}

\end{document}